\newcommand{\bbQ}{\mathbb{Q}}
\newcommand{\abs}[1]{|#1|}
\newcommand{\restrict}{\upharpoonright}
\title{On the degrees of constructively immune sets}
\author{Samuel D.~Birns \\ Bj{\o}rn Kjos-Hanssen\thanks{This work was partially supported by a grant from the Simons Foundation (\#704836 to Bj\o rn Kjos-Hanssen).}}
\author{Samuel D. Birns\inst{1} \and Bj{\o}rn Kjos-Hanssen \inst{1}\orcidID{0000-0002-1825-0097}\thanks{This work was partially supported by a grant from the Simons Foundation (\#704836 to Bj\o rn Kjos-Hanssen).}
}
\institute{
University of Hawai\textquoteleft i at M\=anoa, Honolulu HI 96822, U.S.A.,\\
\email{sbirns@hawaii.edu, bjoern.kjos-hanssen@hawaii.edu},\\ WWW home page:
\texttt{http://math.hawaii.edu/wordpress/bjoern/}
}
\begin{document}

\maketitle

\begin{abstract}
Xiang Li (1983) introduced what are now called constructively immune sets as an effective version of immunity.
Such have been studied in relation to randomness and minimal indices, and we add another application area: numberings of the rationals.
We also investigate the Turing degrees of constructively immune sets and the closely related $\Sigma^0_1$-dense sets of Ferbus-Zanda and Grigorieff (2008).

\keywords{constructively immune, Turing degrees, theory of numberings}
\end{abstract}

\section{Introduction}

Effectively immune sets, introduced by Smullyan in 1964 \cite{MR180485}, are well-known in computability as one of the incarnations of diagonal non-computability, first made famous by Arslanov's completeness criterion.
A set $A\subseteq\omega$ is \emph{effectively immune} if there is a computable function $h$ such that $\abs{W_e}\le h(e)$ whenever $W_e\subseteq A$, where $\{W_e\}_{e\in\omega}$ is a standard enumeration of the computably enumerable (c.e.) sets.

There is a more obvious effectivization of immunity (the lack of infinite computable subsets), however:
\emph{constructive immunity}, introduced by Xiang Li \cite{MR723334} who actually (and inconveniently) called it ``effective immunity''.

\begin{definition}
	A set $A$ is \emph{constructively immune}  if
	there exists a partial recursive $\psi$ such that for all $x$,
	if $W_x$ is infinite then $\psi(x)\downarrow$ and $\psi(x)\in W_x\setminus A$.
\end{definition}

The Turing degrees of constructively immune sets and the related $\Sigma^0_1$-dense sets have not been considered before in the literature, except that Xiang Li implicitly showed that they include all c.e.~degrees.
We prove in \Cref{sec:prevalent} that the Turing degrees of $\Sigma^0_1$-dense sets include all non-$\Delta^0_2$ degrees, all high degrees, and all c.e.~degrees.
We do not know whether they include \emph{all} Turing degrees.

The history of the study of constructive immunity seems to be easily summarized. After Xiang Li's 1983 paper,
Odifreddi's 1989 textbook \cite{MR982269} included Li's results as exercises, and Calude's 1994 monograph \cite{MR1323429} showed that the set
$RAND^C_t=\{x: C(x)\ge \abs{x}-t\}$ is constructively immune, where $C$ is Kolmogorov complexity. Schafer 1997 \cite{MR1654312} further developed an example involving minimal indices,
and Brattka 2002 \cite{MR2059846} gave one example in a more general setting than Cantor space. Finally in 2008 Ferbus-Zanda and Grigorieff proved an equivalence with constructive $\Sigma^0_1$-density.

\begin{definition}[Ferbus-Zanda and Grigorieff \cite{ferbuszanda2008refinment}]\label{def_eff_susc}
	A set $A\subseteq\omega$ is \emph{$\Sigma^0_1$-dense} if for every infinite c.e. set $C$, there exists an infinite c.e. set $D$ such that $D \subseteq C$ and $D \subseteq A$.

	If there is a computable function $f:\omega\to\omega$ such that for each $W_e$, $W_{f(e)}\subseteq A\cap W_e$, and $W_{f(e)}$ is infinite if $W_e$ is infinite, then $A$ is \emph{constructively $\Sigma^0_1$-dense}.
\end{definition}

We should note that while the various flavors of immune sets are always infinite by definition, Ferbus-Zanda and Grigorieff do not require $\Sigma^0_1$-dense sets to be co-infinite.

The $\Sigma^0_1$-dense sets form a natural $\Pi^0_4$ class in $2^\omega$ that coincides with the simple sets on $\Delta^0_2$ but is prevalent (in fact exists in every Turing degree) outside of $\Delta^0_2$ by \Cref{prevalent} below.

\section{$\Sigma^0_1$-density}

To show that there exists a set that is $\Sigma^0_1$-dense, but not constructively so, we use Mathias forcing.
A detailed treatment of the computability theory of Mathias forcing can be found in \cite{MR3210076}.
\begin{definition}
	A \emph{Mathias condition} is a pair $(d, E)$ where $d, E \subseteq \omega$, $d$ is a finite set, $E$ is an infinite computable set, and $\max(d) < \min(E)$.
	A condition $(d_2, E_2)$ \emph{extends} a condition $(d_1, E_1)$ if
	\begin{itemize}
		\item $d_1=d_2\cap(\max d_1+1)$, i.e., $d_1$ is an initial segment of $d_2$,
		\item $E_2$ is a subset of $E_1$, and
		\item $d_2$ is contained in $d_1 \cup E_1$.
	\end{itemize}
	A set $A$ is \emph{Mathias generic} if it is generic for Mathias forcing.
\end{definition}

\begin{theorem}\label{mathias}
If $A$ is Mathias generic, then
\begin{enumerate}
	\item $\omega\setminus A$ is $\Sigma^0_1$-dense.
	\item $\omega\setminus A$ is not constructively $\Sigma^0_1$-dense.
\end{enumerate}
\end{theorem}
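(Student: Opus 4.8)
The plan is to treat the two clauses by density arguments for Mathias forcing, using throughout the basic feature that if $(d,E)$ belongs to the generic filter $G$ then $d\subseteq A\subseteq d\cup E$. More precisely, since a one-step extension forces $d_2\subseteq d_1\cup E_1$ and $E_2\subseteq E_1$, transitivity gives $d'\subseteq d\cup E$ for every $(d',E')\le(d,E)$ in $G$, and since $G$ is a filter every element of $A$ lies in some such $d'$; hence $A\subseteq d\cup E$. I will only invoke genericity for countably many arithmetical dense sets (one per infinite computable set in clause (1), one per total computable function in clause (2)).

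For clause (1) I first reduce to an infinite \emph{computable} $C$: every infinite c.e.\ set contains an infinite computable subset $C'$, and an infinite c.e.\ $D\subseteq C'\cap(\omega\setminus A)$ also lies in $C\cap(\omega\setminus A)$. For fixed infinite computable $C$ I would show that the set of conditions $(d,E)$ admitting an infinite computable $D\subseteq C$ with $D\cap(d\cup E)=\emptyset$ is dense. Given $(d,E)$: if $C\setminus E$ is infinite I take $D=C\setminus(E\cup d)$ and the condition already qualifies; otherwise $C\subseteq^* E$, so $C\cap E$ is infinite, and I split it into two infinite computable halves $P_0\sqcup P_1$, pass to the extension $(d,E\setminus P_0)$, and take $D=P_0\setminus d$. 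Once $G$ meets this dense set at some $(d,E)$ with witness $D$, the containment $A\subseteq d\cup E$ gives $D\cap A=\emptyset$, so $D$ is an infinite c.e.\ subset of $C\cap(\omega\setminus A)$, which is exactly $\Sigma^0_1$-density.

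For clause (2) the idea is a diagonalization that exploits the uniformity demanded of a constructive witness. Fix a total computable candidate $f$. If for some $e$ the subset clause $W_{f(e)}\subseteq W_e$ fails, or $W_e$ is infinite while $W_{f(e)}$ is finite, then $f$ is not a witness outright (these are facts independent of $A$), so assume otherwise. The key trick is to feed $f$ the reservoir itself: given a condition $(d,E)$, let $e$ be an index for the computable set $E$, so $W_e=E$ is infinite and hence $W_{f(e)}\subseteq E$ is infinite. Enumerating $W_{f(e)}$ I find some $x\in W_{f(e)}$ with $x>\max d$ and extend to $(d\cup\{x\},\,\{y\in E:y>x\})$, a legitimate Mathias extension precisely because $x\in E$. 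This places $x\in A$, so $W_{f(e)}\not\subseteq\omega\setminus A$ although $W_e$ is infinite; thus the set of conditions $(d,E)$ with $d\cap W_{f(e)}\neq\emptyset$ for some $e$ with $W_e$ infinite is dense. A Mathias generic $A$ meets it, so $f$ fails for $\omega\setminus A$; as $f$ was an arbitrary total computable function, $\omega\setminus A$ is not constructively $\Sigma^0_1$-dense.

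The routine parts are checking that these extensions are genuine conditions and that the displayed sets are dense and arithmetically definable. The crux—and the only genuinely clever step—is the reservoir-feeding move in clause (2): it converts the single global computable function $f$ into a per-condition obligation whose output is trapped inside the current reservoir, which is exactly the region over which the generic retains the freedom to inject a point into $A$. This is also the conceptual reason clause (1) succeeds while clause (2) fails: in (1) the subset $D$ may be chosen non-uniformly after inspecting the condition, whereas a constructive witness must commit to $W_{f(e)}$ in advance.
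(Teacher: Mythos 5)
Your proposal is correct and takes essentially the same approach as the paper: clause (1) by the same density argument (the paper splits a computable subset of $E\cap W_e$ into two infinite halves and shrinks the reservoir to one of them, just as you split $C\cap E$ into $P_0\sqcup P_1$), and clause (2) by precisely the paper's key trick of applying $f$ to an index $e$ of the reservoir $E$ itself and then forcing an element of $W_{f(e)}\subseteq E$ into the finite part of the condition. Incidentally, your write-up states the diagonalization step correctly where the paper's text has a slip: the paper asks for an extension with $E'\cap W_{f(i)}$ nonempty (which is automatic), whereas what is needed, and what you do, is $d'\cap W_{f(i)}\neq\emptyset$, so that a point of $W_{f(e)}$ is forced into $A$.
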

\begin{proof}
\noindent 1. Let $W_e$ be an infinite c.e.~set. Let $(d,E)$ be a Mathias condition.

\indent Case (i): $E\cap W_e$ is finite. Then for any Mathias generic $A$ extending the condition $(d,E)$, $\omega\setminus A$ contains an infinite subset of $W_e$, in fact a set of the form $W_e\setminus F$ where $F$ is finite.

\indent Case (ii): $E\cap W_e$ is infinite. Then $E\cap W_e$ is c.e., hence has an infinite computable subset $D$.
Write $D=D_1\cup D_2$ where $D_1, D_2$ are disjoint infinite c.e.~sets. The condition $(d,D_1)$ extends $(d,E)$ and forces a Mathias generic $A$ extending it to be such that $\omega\setminus A$ has an infinite subset in common with $W_e$, namely $D_2$.

We have shown that for each infinite c.e.~set $W_e$, each Mathias condition has an extension forcing the statement that a Matias generic $A$ satisfies
\[
	\text{$\omega\setminus A$ has an infinite c.e.~subset in common with $W_e$.}\tag{*}
\]
Thus by standard forcing theory it follows that each Mathias generic satisfies ($*$).

\noindent 2. Let $f$ be a computable function.
It suffices to show that for each Mathias generic $A$, there exists an $i$ such that $W_i$ is infinite and $W_{f(i)}$ is either finite, or not a subset of $W_i$, or not a subset of $\overline A$. For this, as in (1) above it suffices to show that for each condition $(d,D)$ there exists a condition $(d',E')$ extending $(d,E)$ and an $i$ such that $W_i$ is infinite and $W_{f(i)}$ is either finite, or not a subset of $W_i$, or not a subset of $\overline A$ for any $A$ extending $(d',D')$.

Let $(d,E)$ be a Mathias condition and write $D=W_i$. If $W_{f(i)}$ is finite or not a subset of $W_i$ then we are done.
Otherwise there exists a condition $(d',E')$ extending $(d,E)$ such that $E'\cap W_{f(i)}$ is nonempty.
This can be done by a finite extension (making only finitely many changes to the condition).
\end{proof}

\begin{theorem}[{\cite[Proposition 3.3]{ferbuszanda2008refinment}}]\label{p:immune}
	A set $Z\subseteq\omega$ is constructively immune
	if and only if it is infinite and $\omega\setminus Z$ is
	constructively $\Sigma^0_1$-dense.
\end{theorem}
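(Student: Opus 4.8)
The plan is to prove the two implications separately, writing $C=\omega\setminus Z$ throughout, so that the $\psi$ witnessing constructive immunity of $Z$ satisfies $\psi(x)\in W_x\cap C$ whenever $W_x$ is infinite, and so that constructive $\Sigma^0_1$-density of $C$ asks for a computable $f$ with $W_{f(e)}\subseteq C\cap W_e$ and with $W_{f(e)}$ infinite whenever $W_e$ is. The backward direction is the routine one. Assuming $Z$ is infinite and $C$ is constructively $\Sigma^0_1$-dense via $f$, I would define $\psi(x)$ to search the enumeration of $W_{f(x)}$ and output its first element. When $W_x$ is infinite, $W_{f(x)}$ is infinite, hence nonempty, so $\psi(x)\downarrow$ and $\psi(x)\in W_{f(x)}\subseteq C\cap W_x=W_x\setminus Z$; when $W_x$ is finite, $\psi(x)$ may diverge, which is harmless, since the definition of constructive immunity only constrains $\psi$ on indices of infinite sets. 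Infiniteness of $Z$ is handed to us by hypothesis, so this yields constructive immunity of $Z$.

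For the forward direction I would first note that immune sets are infinite, so the ``infinite'' clause on the right is immediate and the work is to manufacture $f$ from $\psi$. The natural idea is iteration: given $e$, build $W_{f(e)}$ in stages, maintaining the finite set $F$ of elements enumerated so far. At each stage I would use the s-m-n theorem to compute an index $j$ for the c.e.\ set $W_e\setminus F$, run $\psi(j)$, wait for the value $a=\psi(j)$ to appear in $W_e$ (confirming $a\in W_e$, and, after checking $a\notin F$, that $a\in W_e\setminus F$), and then put $a$ into both $W_{f(e)}$ and $F$. When $W_e$ is infinite, every $W_e\setminus F$ is again infinite, so $\psi$ converges and returns a genuinely new element of $(W_e\setminus F)\cap C\subseteq W_e\cap C$; hence $W_{f(e)}$ grows without bound and is an infinite c.e.\ subset of $C\cap W_e$, exactly as required.

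The hard part will be the subset condition $W_{f(e)}\subseteq C$ in the case that $W_e$ is \emph{finite}: then $W_e\setminus F$ is finite, $\psi$ carries no guarantee on finite indices, and it may legitimately return an element of $W_e\cap Z$, which the naive iteration would wrongly enumerate into $W_{f(e)}$. In fact one can arrange a perfectly valid witness $\psi$ that misbehaves on any prescribed finite index, so the finite case genuinely cannot be ignored. To repair safety I would never feed $\psi$ a set that is not manifestly infinite: first extract, by iterating $\psi$ on an infinite computable set, an infinite c.e.\ set $B\subseteq C$; then at each stage apply $\psi$ to an index for $(W_e\setminus F)\cup B$, which is infinite \emph{unconditionally}, so that every returned value lies in $C$. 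I would enumerate into $W_{f(e)}$ only those returned values confirmed to lie in $W_e$, discarding the ones that land in $B$; this secures $W_{f(e)}\subseteq C\cap W_e$ for \emph{every} $e$.

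The genuine obstacle is then to reconcile this safety device with infiniteness, and I expect it to be the crux of the whole argument. Because the padding $B$ is always infinite, the \emph{given} $\psi$ might persistently select padding elements and never touch $W_e$, even when $W_e$ is infinite; so simply padding does not by itself force $W_{f(e)}$ to be infinite. Overcoming this---forcing infinitely many outputs into $W_e$ while keeping every fed set infinite, and hence every output in $C$---is the heart of the matter. The resolution must exploit immunity of $Z$, which guarantees that $W_e\cap C$ is infinite whenever $W_e$ is, and pair it with a careful scheduling that deletes the padding elements already returned so that the padding cannot indefinitely divert $\psi$ from $W_e$; balancing ``delete enough padding to force progress'' against ``keep the fed set infinite for safety'' is precisely the delicate point I would have to get right.
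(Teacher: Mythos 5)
Your backward direction is correct and coincides with the paper's: take $\psi(x)$ to be the first element enumerated into $W_{f(x)}$. The forward direction, however, is not a proof: it ends by naming, rather than resolving, the tension between safety ($W_{f(e)}\subseteq C\cap W_e$ for \emph{every} $e$, where $C=\omega\setminus Z$) and productivity ($W_{f(e)}$ infinite whenever $W_e$ is). Worse, the repair you gesture at---deleting already-returned padding elements---cannot be made to work. Safety forces every set you feed to $\psi$ to be infinite no matter what $W_e$ is, hence to contain an infinite c.e.\ part $P$ lying outside the control of $W_e$; and since $Z$ is immune (constructive immunity implies immunity), $P\cap C$ is automatically infinite, so the witness property of $\psi$ is always consistent with $\psi$ returning a fresh element of $P\cap C$ and never touching $W_e$. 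No schedule of deletions changes this: as long as the fed set keeps an infinite part outside $W_e$ (which safety demands), nothing in the hypothesis ever forces $\psi$'s value into $W_e$. The ``balance'' you hope to strike does not exist, so an argument along those lines cannot close.

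What the gap actually calls for is self-reference, not padding. By the recursion theorem, uniformly in $e$ and a finite set $F$, build an index $j$ such that $W_j$ copies $W_e\setminus F$ until, if ever, $\psi(j)\downarrow=a$ is observed; at that point stop copying and instead enumerate an infinite computable tail of numbers larger than $a$ and larger than everything enumerated so far. If $W_e\setminus F$ is infinite then $\psi(j)$ must converge (otherwise $W_j=W_e\setminus F$ is an infinite c.e.\ set on which $\psi$ diverges). And whenever $\psi(j)\downarrow=a$, the set $W_j$ is infinite by construction, so the witness property gives $a\in W_j\setminus Z$; since the tail avoids $a$, the value $a$ lies in the copied part, i.e., $a\in (W_e\setminus F)\cap C$. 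Iterating with $F$ equal to the set of values already found yields distinct elements of $C\cap W_e$, infinitely many when $W_e$ is infinite, and nothing unsafe ever enters $W_{f(e)}$. For comparison: the paper's own forward direction is exactly your ``naive iteration'' (even without your step of confirming $a\in W_e$), and it verifies the two conditions only when $W_i$ is infinite; it thus implicitly reads constructive $\Sigma^0_1$-density as constraining $W_{f(e)}$ only for infinite $W_e$, under which reading your first two paragraphs already form a complete proof. Under the literal definition in the paper (subset condition for every $e$), your objection is genuine---and applies to the paper's proof as well---but then the finish must be the recursion-theorem trap above, not padding.
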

Since Ferbus-Zanda and Grigorieff's paper has not gone through peer review, we provide the proof.
\begin{proof}
	$\Leftarrow$: Let the function $g$ witness that $\omega\setminus Z$ is constructively $\Sigma^0_1$-dense.
	Define a partial recursive function $\varphi$ by stipulating that $\varphi(i)$ is the first number in the enumeration of $W_{g(i)}$, if any.

	$\Rightarrow$:
	Define a partial recursive function $\mu(i,n)$ by
	\begin{itemize}
		\item $\mu(i,0)=\varphi(i)$;
		\item $\mu(i,n+1)=\varphi(i_n)$, where $i_n$ is such that
		  $W_{i_n}=W_i\setminus\{\mu(i,m) : m\leq n\}$.
	\end{itemize}
	Let $g$ be total
	recursive so that $W_{g(i)}=\{\mu(i,m) : m\in\omega\}$.
	If $W_i$ is infinite then all $\mu(i,m)$'s are defined
	and distinct and belong to $W_i\cap Z$.
	Thus, $W_{g(i)}$ is an infinite subset of
	$W_i\cap Z$.
\end{proof}

Recall that a c.e.~set is \emph{simple} if it is co-immune.

\begin{theorem}[Xiang Li \cite{MR723334}]\label{xiang}
	Let $A$ be a set and let $\{\phi_x\}_{x\in\omega}$ be a standard enumeration of the partial computable functions.
	\begin{enumerate}
		\item\label{xiang-1} If $A$ is constructively immune then $A$ is immune and $\overline A$ is not immune.
		\item\label{xiang-2} If $A$ is simple then $\overline A$ is constructively immune.
		\item\label{xiang-3} $\{x: (\forall y)(\phi_x=\phi_y \to x \le y)\}$ is constructively immune.
	\end{enumerate}
\end{theorem}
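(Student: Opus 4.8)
The three parts are largely independent, and Part~\ref{xiang-3} is where the real work lies. For Part~\ref{xiang-1}, the immunity of $A$ is almost immediate from the definition of constructive immunity: if $A$ had an infinite c.e.\ subset $W_x\subseteq A$, then $\psi(x)$ would be forced to lie in $W_x\setminus A=\emptyset$, a contradiction, so $A$ has no infinite c.e.\ subset, and $A$ is infinite by the standing convention that immunity presupposes infiniteness. To see that $\overline A$ is not immune I would exhibit an infinite c.e.\ subset of $\overline A$: applying $\psi$ to an index of $\omega$ produces one element of $\overline A$, and iterating --- repeatedly feeding $\psi$ an index for $\omega$ with the finitely many elements found so far deleted, which is exactly the $\mu$-construction of \Cref{p:immune} --- yields infinitely many distinct elements of $\overline A$ forming an infinite c.e.\ set. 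Equivalently, once $A$ is infinite, \Cref{p:immune} already gives that $\overline A$ is constructively $\Sigma^0_1$-dense, and feeding $W_e=\omega$ to its witness produces the infinite c.e.\ subset directly.

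For Part~\ref{xiang-2}, recall that $A$ simple means $A$ is c.e.\ and $\overline A$ is immune, so $\overline A$ is infinite with no infinite c.e.\ subset. To witness constructive immunity of $\overline A$ I must define a partial recursive $\psi$ with $\psi(x)\in W_x\setminus\overline A=W_x\cap A$ whenever $W_x$ is infinite. The key observation is that an infinite $W_x$ cannot be contained in $\overline A$, for otherwise it would be an infinite c.e.\ subset of the immune set $\overline A$; hence $W_x\cap A\neq\emptyset$. Since $A$ is c.e., I would dovetail the enumerations of $W_x$ and of $A$ and let $\psi(x)$ be the first number discovered to lie in both. This search halts precisely because $W_x\cap A$ is nonempty, and its output lies in $W_x\cap A$ as required.

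For Part~\ref{xiang-3}, write $M=\{x:(\forall y)(\phi_x=\phi_y\to x\le y)\}$ for the set of minimal indices; I must produce a partial recursive $\psi$ that, given any $e$ with $W_e$ infinite, returns a non-minimal index lying in $W_e$, that is, an element of $W_e\setminus M$. The plan is a recursion-theorem construction. Using the $s$-$m$-$n$ theorem, I would define a computable $F(e,n)$ so that $\phi_{F(e,n)}$ enumerates $W_e$ until the first element $a>n$ appears and then simulates $\phi_a$; thus $\phi_{F(e,n)}=\phi_a$ for that $a$, whenever such an $a$ exists. The parametrized recursion theorem then yields a computable $n(e)$ with $\phi_{n(e)}=\phi_{F(e,n(e))}$. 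If $W_e$ is infinite the required element $a>n(e)$ exists, and then $\phi_a=\phi_{n(e)}$ with $a>n(e)$; since $\phi_a$ therefore has an index (namely $n(e)$, hence also its true minimal index $\le n(e)$) strictly below $a$, the number $a$ is non-minimal, and setting $\psi(e)=a$ gives the witness. The main obstacle --- and essentially the only subtle point --- is arranging the self-reference so that the fixed-point index $n(e)$ is provably smaller than the target element $a\in W_e$ with which it is made to agree; everything else is routine dovetailing and direct appeal to the definition of constructive immunity.
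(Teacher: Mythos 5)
The paper itself contains no proof of \Cref{xiang} --- the result is quoted from Xiang Li's paper \cite{MR723334} --- so there is no in-paper argument to compare against; your proposal supplies the missing proofs, and all three are correct. For part \ref{xiang-1}, the contradiction $\psi(x)\in W_x\setminus A=\emptyset$ correctly rules out infinite c.e.\ subsets of $A$, and your appeal to the convention that constructively immune sets are infinite is consistent with the paper's explicit remark that ``the various flavors of immune sets are always infinite by definition''; your iteration of $\psi$ on indices of cofinite sets is exactly the $\mu$-construction the paper uses to prove \Cref{p:immune}, so invoking that theorem with $W_e=\omega$ is a legitimate shortcut rather than a circularity. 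For part \ref{xiang-2}, immunity of $\overline A$ forces $W_x\cap A\neq\emptyset$ whenever $W_x$ is infinite, and dovetailing the enumerations of $W_x$ and of the c.e.\ set $A$ gives the required partial recursive witness; note the witness may also halt on some finite $W_x$, which is harmless since the definition only constrains its behavior on infinite ones. For part \ref{xiang-3}, your recursion-theorem construction is the standard argument for minimal-index sets: the $s$-$m$-$n$ function $F(e,n)$, the parametrized fixed point $n(e)$, and the search in $W_e$ for the first element $a>n(e)$ yield $\phi_a=\phi_{F(e,n(e))}=\phi_{n(e)}$ with $n(e)<a$, so $a$ is a non-minimal element of $W_e$, and $\psi(e)=a$ is partial recursive because $n(e)$ is total computable and the search halts whenever $W_e$ is infinite. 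The one point worth making explicit in a final write-up is the case analysis inside $F$: when $W_e$ has no element exceeding $n$, $\phi_{F(e,n)}$ is the empty function, which is fine because that case never arises for infinite $W_e$.
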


\subsection{Numberings}
A \emph{numbering} of a countable set $\mathcal A$ is an onto function $\nu:\omega\to\mathcal A$.
The theory of numberings has a long history \cite{MR1720731}.
Numberings of the set of rational numbers $\mathbb Q$ provide an application area for $\Sigma^0_1$-density.
Rosenstein \cite[Section 16.2: Looking at $\mathbb Q$ effectively]{MR662564} discusses computable dense subsets of $\mathbb Q$.
Here we are mainly concerned with noncomputable sets.

\begin{proposition}\label{co-immune}
	Let $A \subseteq \omega$. The following are equivalent:
	\begin{enumerate}
		\item $\nu(A)$ is dense for every injective computable numbering $\nu$ of $\bbQ$;
		\item $A$ is co-immune.
	\end{enumerate}
\end{proposition}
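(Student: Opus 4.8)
The plan is to prove the two implications separately: $(2)\Rightarrow(1)$ directly, and $(1)\Rightarrow(2)$ by contraposition through an explicit construction of a bad numbering. Throughout I read co-immunity of $A$ as the assertion that $\overline A$ contains no infinite computable set (matching the parenthetical ``lack of infinite computable subsets'' from the introduction); it is worth flagging that if one additionally demanded that $\overline A$ be infinite, the cofinite case would force $(1)$ to hold while $(2)$ failed, so this reading is the one under which the equivalence is correct.

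For $(2)\Rightarrow(1)$, fix an injective computable numbering $\nu$ and rationals $p<q$. The set $B=\{n:\ p<\nu(n)<q\}$ is computable, since $\nu$ is computable and order comparisons between rationals are decidable, and it is infinite, since $\nu$ is an injection onto $\bbQ$ and $(p,q)$ contains infinitely many rationals. Co-immunity of $A$ gives $B\not\subseteq\overline A$, so some $n\in B\cap A$ witnesses $\nu(n)\in\nu(A)\cap(p,q)$. As $(p,q)$ was an arbitrary basic interval, $\nu(A)$ is dense.

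For $(1)\Rightarrow(2)$ I argue contrapositively. Suppose $A$ is not co-immune, so $\overline A$ has an infinite computable subset $C$. Splitting $C=C_0\sqcup C_1$ into two infinite computable pieces makes $C_0$ both infinite and co-infinite. Writing $I=(0,1)\cap\bbQ$, I build a computable bijection $\nu:\omega\to\bbQ$ that maps $C_0$ onto $I$ and $\overline{C_0}$ onto $\bbQ\setminus I$: enumerating $C_0$ and $I$ (respectively $\overline{C_0}$ and $\bbQ\setminus I$) in their natural index orders and matching them position by position yields a computable bijection on each piece, and the two pieces glue to a computable bijection of $\omega$ onto $\bbQ$. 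Since $A\subseteq\overline C\subseteq\overline{C_0}$ and $\nu$ is injective, $\nu(A)\subseteq\bbQ\setminus I$ avoids the interval $(0,1)$, so $\nu(A)$ is not dense.

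The one delicate point is checking that $\nu$ is a \emph{total} computable bijection onto all of $\bbQ$, rather than a mere injection into it; this is exactly why I pre-split $C$, so that both $C_0,I$ and $\overline{C_0},\bbQ\setminus I$ are infinite computable sets and the position-matching bijections carry no cardinality mismatch. Granting that, injectivity, surjectivity, and computability of $\nu$ are routine bookkeeping, and the remainder of the argument is immediate.
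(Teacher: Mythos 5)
Your proof is correct and follows essentially the same route as the paper: both directions rest on the same two constructions, namely a numbering that squeezes an infinite computable subset of $\overline A$ onto a bounded interval for (1)$\Rightarrow$(2), and the fact that preimages of intervals under a computable injective numbering are infinite computable sets for (2)$\Rightarrow$(1). Your version is in fact somewhat more careful than the paper's: you pass to a computable (not merely c.e.) subset and split it so that both sides of the bijection are infinite, and you flag the cofinite-complement subtlety in the definition of co-immunity --- details the paper's proof glosses over (it builds its numbering directly from a c.e.\ set $W_e$, which could not even be the $\nu$-preimage of $[0,1]$ if $W_e$ were noncomputable).
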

\begin{proof}
	\indent (1)$\implies$(2): We prove the contrapositive.
	Suppose $\overline{A}$ contains an infinite c.e. set $W_e$. Consider a computable numbering $\nu$ that maps $W_e$ onto $[0,1]\cap\mathbb Q$. Then $\nu(A)$ is disjoint from $[0,1]$
	and hence not dense.

	\indent (2)$\implies$(1): We again prove the contrapositive.
	Assume that $\nu(A)$ is not dense for a certain computable $\nu$.
	Let $\{x_n:n\in\omega\}$ be a converging infinite sequence of rationals disjoint from $\nu(A)$.
	Then $\{\nu^{-1}(x_n):n\in\omega\}$ is an infinite c.e.~subset of $\overline{A}$.
\end{proof}

\begin{definition}
	A subset $A$ of $\bbQ$ is \emph{co-nowhere dense} if for each interval $[a,b] \subseteq \bbQ$, $[a', b'] \subseteq A$ for some $[a', b'] \subseteq [a,b]$.
\end{definition} 

\begin{proposition}\label{co-finite}
	A set is co-nowhere dense under every numbering iff it is co-finite. 
\end{proposition}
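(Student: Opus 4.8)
The plan is to read ``co-nowhere dense under every numbering'' as the assertion that $\nu(A)$ is co-nowhere dense in $\bbQ$ for every numbering $\nu:\omega\to\bbQ$, and to prove the two implications separately, in the same contrapositive style as \Cref{co-immune}.

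For the direction ($\Leftarrow$), suppose $A$ is co-finite and let $\nu$ be any numbering. The key observation is that since $\nu$ is onto, every rational missing from $\nu(A)$ must be the image of some element of the complement, so $\bbQ\setminus\nu(A)\subseteq\nu(\omega\setminus A)$. As $\omega\setminus A$ is finite, the right-hand side is finite, hence $\bbQ\setminus\nu(A)$ is a finite set of rationals. A finite set is nowhere dense in $\bbQ$ (it is closed and has empty interior, as every rational interval is infinite), so given any interval $[a,b]$ one can choose a subinterval $[a',b']\subseteq[a,b]$ avoiding the finitely many excluded points; then $[a',b']\subseteq\nu(A)$, witnessing co-nowhere density.

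For the direction ($\Rightarrow$) I would prove the contrapositive: assuming $A$ is not co-finite, I construct a single numbering $\nu$ for which $\nu(A)$ fails to be co-nowhere dense. Since $\omega\setminus A$ is infinite, a cardinality argument lets me choose a bijection $\nu:\omega\to\bbQ$ with $\nu(A)\subseteq\bbQ\setminus\bigl([0,1]\cap\bbQ\bigr)$: pick $\nu(A)$ to be a subset of the countably infinite set $\bbQ\setminus\bigl([0,1]\cap\bbQ\bigr)$ of the same cardinality as $A$ (finitely many points if $A$ is finite, a countable subset otherwise), and map $\omega\setminus A$ bijectively onto the remaining rationals, which still include all of $[0,1]\cap\bbQ$. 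Then $[0,1]\cap\bbQ\subseteq\bbQ\setminus\nu(A)$, so no rational subinterval of $[0,1]$ can lie inside $\nu(A)$, and co-nowhere density fails at the interval $[0,1]$.

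The only point requiring care is the cardinality bookkeeping in this construction: one must check that the chosen assignment really is a bijection $\omega\to\bbQ$, so that $\nu$ is a legitimate numbering, while simultaneously forcing $[0,1]\cap\bbQ$ into the image of $\omega\setminus A$. This handles the finite and infinite cases of $A$ uniformly, and since a bijection is in particular a numbering, exhibiting this one $\nu$ suffices. (If one instead insists on \emph{computable} numberings, the ($\Leftarrow$) direction is unchanged, whereas the construction in ($\Rightarrow$) would additionally need $A$ to permit a computable such bijection; under the plain reading ``every numbering'' this complication does not arise.)
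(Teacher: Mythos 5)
Your proof is correct and takes essentially the same route as the paper: the co-finite direction is treated as the easy one, and the other direction is proved contrapositively by constructing a numbering $\nu$ that sends $\omega\setminus A$ onto a set covering $[0,1]\cap\bbQ$, so that $\nu(A)$ misses $[0,1]$ and co-nowhere density fails there. You are in fact slightly more careful than the paper's one-line construction, which leaves implicit both that $\nu(A)$ must avoid $[0,1]$ and the cardinality bookkeeping needed to make $\nu$ a genuine (onto) numbering when $A$ is finite versus infinite.
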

\begin{proof}
	Only the forward direction needs to be proven; the other direction is immediate.
	Let $A$ be a co-infinite set, and define $\nu$ by letting $\nu$ map $\omega \setminus A$ onto $[0,1]$. Then $A$ is not co-nowhere dense. 
\end{proof}

\begin{proposition}\label{lax-non-immune}
$A$ is infinite and non-immune iff there exists a computable numbering with respect to which $A$ is co-nowhere dense.
\end{proposition}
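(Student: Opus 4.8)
The plan is to prove both implications, taking the numbering to be injective --- a computable bijection $\nu\colon\omega\to\bbQ$, as in \Cref{co-immune}. Injectivity is what makes the backward implication available, and exhibiting a suitable injective numbering is where the real work lies in the forward implication.

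For the forward implication I would start from the hypothesis that $A$ is infinite and not immune, which by the definition of immunity yields an infinite c.e.\ subset of $A$; since every infinite c.e.\ set contains an infinite computable subset, I can fix one and thin it (keeping every second element) to an infinite, co-infinite, computable set $B\subseteq A$. I would then fix once and for all a computable co-nowhere-dense set $P\subseteq\bbQ$ with infinite complement --- for instance $P=\bbQ\setminus\{\,n+\tfrac12:n\in\bbZ\,\}$, whose complement is infinite, discrete, and hence nowhere dense, so that every rational interval contains a subinterval lying in $P$. Since $B$ and $\omega\setminus B$ are infinite computable sets while $P$ and $\bbQ\setminus P$ are infinite computable subsets of $\bbQ$, matching $B$ with $P$ and $\omega\setminus B$ with $\bbQ\setminus P$ in order of enumeration yields a computable bijection $\nu$. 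Then $\nu(A)\supseteq\nu(B)=P$, and since any superset of a co-nowhere-dense set is co-nowhere dense, $A$ is co-nowhere dense with respect to $\nu$.

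For the backward implication I would assume $\nu$ is an injective computable numbering for which $\nu(A)$ is co-nowhere dense, and apply the definition to $[0,1]$ to obtain rationals $a'<b'$ with $[a',b']\cap\bbQ\subseteq\nu(A)$. I would then form $S=\{\,n:a'\le\nu(n)\le b'\,\}$, which is computable because $\nu$ is computable and $a',b'$ are rational, and infinite because $\nu$ sends it onto the infinite set $[a',b']\cap\bbQ$. The key point is that for $n\in S$ we have $\nu(n)\in\nu(A)$, so $\nu(n)=\nu(m)$ for some $m\in A$, whence injectivity gives $n=m\in A$; thus $S\subseteq A$, and $A$ contains an infinite computable set, so $A$ is infinite and non-immune.

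The main obstacle is the forward construction, where $\nu$ must be simultaneously onto, injective, and computable while already forcing a co-nowhere-dense image; pre-selecting the target $P$ with an infinite complement is precisely what leaves room for both $B$ and $\omega\setminus B$ to be mapped bijectively. I would also flag that injectivity is essential to the backward direction: a highly non-injective numbering can collapse many indices onto a single rational, so that even a set containing no infinite computable subset can acquire a co-nowhere-dense image, and the implication $\nu(n)\in\nu(A)\Rightarrow n\in A$ then fails.
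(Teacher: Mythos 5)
Your proof is correct and takes essentially the same route as the paper's: the forward direction maps an infinite computable subset of $A$ onto the rationals lying outside a discrete (hence nowhere dense) set, and the backward direction pulls an interval contained in $\nu(A)$ back to an infinite computable subset of $A$. If anything, yours is the more careful rendering: the paper sends the c.e.\ set $W_e$ directly onto $\mathbb{Q}\setminus\omega$ and asserts $\nu^{-1}([a,b])\subseteq A$, steps which tacitly require exactly the thinning to a computable, co-infinite subset and the injectivity of $\nu$ that you make explicit (and your closing remark that the backward implication genuinely fails for non-injective numberings is accurate).
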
 
\begin{proof}
Let $A$ be infinite and not immune. Thus, there is an infinite $W_e \subseteq A$ for some $e$.
Let $\nu$ be a computable numbering that maps $W_e$ onto $\bbQ \setminus \omega$. Then $A$ is co-nowhere dense under $\nu$.

Conversely, let $A$ be co-nowhere dense under some computable numbering $\nu$. Then $\nu^{-1}([a,b])$ is an infinite c.e. subset of $A$ for some suitable $a,b$.
\end{proof}

A set $D\subseteq\mathbb Q$ is \emph{effectively dense} if there is a computable function $f(a,b)$ giving an element of $D\cap (a,b)$ for $a<b\in \mathbb Q$.

\begin{proposition}\label{thm:numbering}
	A set $A$ is constructively $\Sigma^0_1$-dense iff it is effectively dense for all computable numberings.
\end{proposition}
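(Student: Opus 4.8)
The plan is to prove both implications, treating the two effectivity notions as mirror images of one another through the correspondence ``rational interval $(a,b)$ $\leftrightarrow$ c.e.\ preimage $\nu^{-1}((a,b))$,'' exactly as in the proof of \Cref{co-immune} but now keeping careful track of witnesses.

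For the forward direction, suppose $f$ witnesses that $A$ is constructively $\Sigma^0_1$-dense, and let $\nu$ be any computable numbering of $\bbQ$. Given rationals $a<b$, the set $C=\nu^{-1}((a,b))=\{n:\nu(n)\in(a,b)\}$ is computable, and it is infinite because $\nu$ is onto and $(a,b)$ contains infinitely many rationals; moreover a c.e.\ index $c$ for $C$ can be computed from $a,b$ (and the index of $\nu$). Applying $f$ to $c$ yields an infinite set $W_{f(c)}\subseteq A\cap C$, and enumerating it produces some $m\in A$ with $\nu(m)\in(a,b)$. Returning $\nu(m)$ defines a computable choice function, so $\nu(A)$ is effectively dense; note that this procedure is uniform in an index for $\nu$.

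For the converse I would first reduce, via \Cref{p:immune}, to producing a single partial recursive $\psi$ with the one-point property ``$W_e$ infinite $\implies \psi(e)\in A\cap W_e$'': the bootstrapping in the proof of \Cref{p:immune} then upgrades $\psi$ to the required $f$ (equivalently, such a $\psi$ witnesses that $\overline A$ is constructively immune). To build $\psi$, fix $e$ with $W_e$ infinite and extract, uniformly and by the standard ``record element'' trick, an infinite \emph{computable} subset $C_e\subseteq W_e$. Because $C_e$ is computable I can define a genuine computable numbering $\nu_e$ of $\bbQ$ that maps $C_e$ bijectively onto $(0,1)\cap\bbQ$ and maps $\omega\setminus C_e$ onto $\bbQ\setminus(0,1)$; crucially $\nu_e^{-1}((0,1))=C_e$ and nothing else maps there. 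By hypothesis $\nu_e(A)$ is effectively dense, so its witness evaluated at $(0,1)$ returns a rational $\nu_e(n)\in\nu_e(A)\cap(0,1)$; then $n\in A$ and $\nu_e(n)\in(0,1)$ force $n\in C_e\subseteq W_e$, so $n\in A\cap W_e$, and $n$ is recovered by inverting the bijection $\nu_e\restrict C_e$. Here density does genuine work: since $\nu_e^{-1}((0,1))=C_e$ exactly, effective density of $\nu_e(A)$ in $(0,1)$ by itself forces $A\cap C_e\neq\emptyset$.

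The step I expect to be the main obstacle is \emph{uniformity}. The construction above furnishes, for each $e$, a numbering $\nu_e$ together with \emph{some} effective-density witness $g_e$, but the definition of $\psi$ requires the map $e\mapsto g_e$ to be computable. The family $\{\nu_e\}$ is itself uniformly computable (an index for $\nu_e$ is obtained from $e$ by the $s$-$m$-$n$ theorem), so the difficulty is precisely that ``effectively dense for every computable numbering'' must be applied \emph{uniformly in an index for the numbering}. I would therefore either invoke effective density in its uniform form---a single computable $F(j,a,b)$ returning an element of $\nu_j(A)\cap(a,b)$ whenever $\nu_j$ is a computable numbering---which makes $e\mapsto\psi(e)$ plainly computable, or else replace the separate $\nu_e$ by one combined numbering. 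The latter route is delicate: placing pairwise disjoint computable sets $C_e\subseteq W_e$ on disjoint intervals keeps a single density witness usable, but one must simultaneously secure ontoness of the combined numbering for those $e$ with $W_e$ finite without polluting the relevant preimages by background values, and reconciling these two demands is the crux. I expect the uniform formulation to be the clean way through.
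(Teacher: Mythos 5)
Your forward direction is correct, and it is in substance the paper's own argument: the paper routes it through \Cref{p:immune} (a constructive-immunity witness for $\omega\setminus A$ applied to an index for $\nu^{-1}((a,b))$), while you apply the $\Sigma^0_1$-density witness directly; these are interchangeable. The genuine gap is in the converse, and you have named it yourself: the hypothesis supplies, for each computable numbering $\nu$, \emph{some} density witness, with no uniformity in an index for $\nu$, whereas the conclusion demands a single computable function handling all $W_e$. Neither of your two escape routes is carried out: route (a), assuming one computable $F(j,a,b)$ uniform in indices $j$ of numberings, proves a different statement from the one asserted; route (b), a single combined numbering, is only gestured at, and the ``ontoness versus purity of preimages'' conflict you flag is exactly where it breaks. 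As written, the converse is therefore unproved.

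Moreover, that conflict is essential, not a removable technicality. First, a recursion-theorem diagonalization rules out the ``pure preimage'' version of route (b): for no computable numbering $\nu$ and computable assignment $e\mapsto(a_e,b_e)$ can one have $\nu^{-1}((a_e,b_e))\subseteq W_e$ whenever $W_e$ is infinite (take a fixed point $e$ with $W_e=\omega\setminus\{\min\nu^{-1}((a_e,b_e))\}$, which is cofinite). So fillers outside $W_e$ are unavoidable, and the decoding argument then needs them to lie outside $A$ --- information you cannot secure in general, e.g.\ when $\overline{A}$ is immune. Second, and decisively, under the literal (non-uniform) reading the converse is in fact \emph{false}, by the paper's own machinery: let $G$ be sufficiently Mathias generic, so that $\overline{G}$ is not constructively $\Sigma^0_1$-dense by \Cref{mathias}. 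Yet $\overline{G}$ is effectively dense under every computable numbering $\nu$: given a condition $(d,E)$, one can computably discard from $E$ a set $E''$ containing, for each rational interval $I$ with $E\cap\nu^{-1}(I)$ infinite, one element of sufficiently high $E$-rank; then $E'=E\setminus E''$ is still infinite and computable, $(d,E')$ extends $(d,E)$, and $\nu(\omega\setminus(d\cup E'))$ is dense (intervals $I$ with $E\cap\nu^{-1}(I)$ finite are already densely met by $\nu(\omega\setminus(d\cup E))$, since $\nu$ is onto and $\nu(d)\cap I$, $\nu(E)\cap I$ are finite). Once the generic passes through $(d,E')$, the computable set $\omega\setminus(d\cup E')$ lies inside $\overline{G}$, and searching it for elements with image in a given interval is a computable density witness; such conditions are dense, and there are only countably many computable $\nu$. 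So your closing instinct is right in a strong sense: uniformity is not merely ``the clean way through'' but the only way, and the same issue is glossed over by the paper's one-sentence treatment of this direction (``we exploit the assumption that we get to choose a suitable $\nu$'').
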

\begin{proof}
	By \Cref{p:immune}, $A$ is constructively $\Sigma^0_1$-dense iff it is infinite and $\omega\setminus A$ is constructively immune.
	Constructive immunity of $\omega\setminus A$ implies effective density of $A$ since the witnessing function for constructive immunity can be be used to witness effective density.
	For the converse we exploit the assumption that we get to choose a suitable $\nu$.
\end{proof}

Let $A$ and $B$ be sets, with $B$ computable.
We say that $A$ is \emph{co-immune within $B$} if there is no infinite computable subset of $A^c\cap B$.
The following diagram includes some claims not proved in the paper, whose proof (or disproof) may be considered enjoyable exercises.
The quantifiers $\exists \nu$, $\forall \nu$ range over computable numberings of $\mathbb Q$.
\[
	\xymatrix{
	\makecell{\textsf{co-finite (\Cref{co-finite})}\\ \text{(eff.) co-nowhere dense }\forall\nu}\ar[r] & \makecell{\text{constructively}\\ \text{$\Sigma^0_1$-dense (\Cref{thm:numbering})}\\ \textsf{constr.~co-immune}\\\text{eff.~dense }\forall\nu}\ar_{\text{strict: \Cref{mathias}}}[d] & \\
	 &\text{$\Sigma^0_1$-dense}\ar_{\text{strict: }\omega\oplus\emptyset}[dl]\ar^{\text{strict: any bi-immune}}[d]&
	 \\
	\makecell{\textsf{infinite \& non-immune (\Cref{lax-non-immune})}\\ \text{(eff.) co-nowhere dense }\exists\nu\\ \text{eff.~dense }\exists\nu \ar[dr]} &\makecell{\textsf{co-immune (\Cref{co-immune})}\\\text{dense }\forall\nu}\ar[d]& \\
	\text{dense }\exists\nu&\makecell{\text{co-immune within}\\\text{some infinite computable set}}\ar[l]
	}
\]

\section{Prevalence of $\Sigma^0_1$-density}\label{sec:prevalent}

In this section we investigate the existence of $\Sigma^0_1$-density in the Turing degrees at large.

\subsection{Closure properties and $\Sigma^0_1$-density}
\begin{proposition}\label{jan23}

	1. The intersection of two $\Sigma^0_1$-dense sets is $\Sigma^0_1$-dense.

	2. The intersection of two constructively $\Sigma^0_1$-dense sets is constructively $\Sigma^0_1$-dense.
\end{proposition}
\begin{proof}
	Let $A$ and $B$ be $\Sigma^0_1$-dense sets.
	Let $W_e$ be an infinite c.e.~set.
	Since $A$ is $\Sigma^0_1$-dense, there exists an infinite c.e.~set $W_d\subseteq A\cap W_e$. Since $B$ is $\Sigma^0_1$-dense, there exists an infinite c.e.~set $W_a\subseteq B \cap W_d$. Then $W_a\subseteq (A \cap B) \cap W_e$, as desired.
	This proves (1). To prove (2), let $f$ and $g$ witness the effective $\Sigma^0_1$-density of $A$ and $B$, respectively. Given $W_e$, we have $W_{f(e)}\subseteq A\cap W_e$ and then
	\[
	W_{g(f(e))}\subseteq B\cap W_{f(e)} \subseteq A\cap B\cap W_e.
	\]
	In other words, $g\circ f$ witnesses the effective $\Sigma^0_1$-density of $A\cap B$.
\end{proof}

\begin{corollary}
	Bi-$\Sigma^0_1$-dense sets do not exist. 
\end{corollary}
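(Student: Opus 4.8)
The plan is to apply the closure property just proved. Reading \emph{bi-$\Sigma^0_1$-dense} by analogy with bi-immunity (used elsewhere in the paper, e.g.\ in the diagram's ``any bi-immune'' label) to mean that both $A$ and its complement $\omega\setminus A$ are $\Sigma^0_1$-dense, the whole argument is a one-line reductio: a set and its complement are disjoint, so \Cref{jan23}(1) would force the empty set to be $\Sigma^0_1$-dense, which is absurd.

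More precisely, I would argue by contradiction. Suppose $A$ is bi-$\Sigma^0_1$-dense, so that both $A$ and $\omega\setminus A$ are $\Sigma^0_1$-dense. By \Cref{jan23}(1) their intersection $A\cap(\omega\setminus A)=\emptyset$ is then $\Sigma^0_1$-dense. But the empty set cannot be $\Sigma^0_1$-dense: taking the infinite c.e.\ set $C=\omega$, the definition demands an infinite c.e.\ set $D$ with $D\subseteq C$ and $D\subseteq\emptyset$, whereas the only subset of $\emptyset$ is $\emptyset$ itself, which is not infinite. This contradiction shows that no such $A$ exists.

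I expect no genuine obstacle here, since all the content is carried by \Cref{jan23}(1); the corollary is essentially a formal remark recording that $\Sigma^0_1$-density, unlike immunity, admits no two-sided version. The only point deserving a moment's care is fixing the convention for the prefix \emph{bi-}, which I take to be the natural two-sided reading in parallel with bi-immunity.
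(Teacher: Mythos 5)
Your proposal is correct and matches the paper's proof exactly: both argue that if $A$ and its complement were $\Sigma^0_1$-dense, then \Cref{jan23} would make the empty set $\Sigma^0_1$-dense, a contradiction. The only difference is that you spell out why the empty set fails the definition, which the paper leaves implicit.
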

\begin{proof}
If $A$ and $A^c$ are both $\Sigma^0_1$-dense then by \Cref{jan23}, $A\cap A^c$ is $\Sigma^0_1$-dense, which is a contradiction.
\end{proof}

For sets $A$ and $B$, $A\subseteq^* B$ means that $A\setminus B$ is a finite set.

\begin{proposition}\label{upclosed}
	 \begin{enumerate}
		 \item\label{one} If $A$ is $\Sigma^0_1$-dense and $A \subseteq^* B$, then $B$ is $\Sigma^0_1$-dense.
		 \item\label{two} If $A$ is constructively $\Sigma^0_1$-dense and $A \subseteq^* B$, then $B$ is constructively $\Sigma^0_1$-dense. 
	\end{enumerate}
\end{proposition}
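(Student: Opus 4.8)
The plan is to exploit the single structural fact that $A$ and $B$ differ only on the finite set $F := A \setminus B$, and that \emph{finiteness} of $F$ is exactly what is needed to move between the c.e.\ subsets of $A$ and those of $B$. The key elementary observation, used in both parts, is that $A \setminus F \subseteq B$: if $x \in A$ and $x \notin F = A \setminus B$, then since $x \in A$ we cannot have $x \notin B$, so $x \in B$. Because $F$ is finite it is computable, so subtracting $F$ from a c.e.\ set yields a c.e.\ set, and does so effectively (via the $s$-$m$-$n$ theorem); moreover subtracting $F$ from an infinite set leaves it infinite.

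For part \ref{one}, I would start with an arbitrary infinite c.e.\ set $C$ and apply $\Sigma^0_1$-density of $A$ to obtain an infinite c.e.\ set $D \subseteq C \cap A$. The natural candidate for a witness inside $B$ is $D \setminus F$. I would verify three things: it is c.e.\ (as $F$ is finite, hence computable), it is infinite (as it omits only the finitely many elements of $F$ from the infinite set $D$), and it lies in $C \cap B$ (it is contained in $D \subseteq C$, and $D \setminus F \subseteq A \setminus F \subseteq B$ by the observation above). This exhibits the required infinite c.e.\ subset of $C \cap B$, so $B$ is $\Sigma^0_1$-dense.

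For part \ref{two}, let $f$ witness constructive $\Sigma^0_1$-density of $A$. I would define a computable $g$ by choosing, via $s$-$m$-$n$, an index with $W_{g(e)} = W_{f(e)} \setminus F$. Then $W_{g(e)} \subseteq (A \cap W_e) \setminus F = (A \setminus F) \cap W_e \subseteq B \cap W_e$, and if $W_e$ is infinite then so is $W_{f(e)}$, hence so is $W_{g(e)}$; thus $g$ witnesses constructive $\Sigma^0_1$-density of $B$. The one point to flag is that $F = A \setminus B$ is a fixed finite set whose elements we may not be able to compute \emph{from} $A$ and $B$; but constructive $\Sigma^0_1$-density only asks for the \emph{existence} of a computable witness $g$, and a fixed finite set is a decidable parameter that may be hard-coded into $g$, so no uniformity in the pair $(A,B)$ is needed.

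The main obstacle to watch for is that $B$ itself need not be c.e., so one cannot simply intersect the witness $D$ (or $W_{f(e)}$) with $B$ and remain among the c.e.\ sets. The entire argument hinges on replacing the intersection with $B$ by subtraction of the finite computable set $F$, which is legitimate precisely because $A \setminus F \subseteq B$; I expect the verification of this inclusion and of the c.e.-ness/effectivity of the subtraction to be the only steps requiring care, and both are routine once $F$ is isolated.
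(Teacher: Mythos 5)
Your proposal is correct and follows essentially the same route as the paper's proof: both subtract the finite set $F = A\setminus B$ from the witness set ($W_c = W_d\setminus(A\setminus B)$ in the paper, $D\setminus F$ in yours) and use its finiteness to preserve c.e.-ness and infinitude, with the same composition $W_{g(e)} = W_{f(e)}\setminus F$ for the constructive case. Your added remark that $F$ can be hard-coded into $g$ without any uniformity in $(A,B)$ is a nice clarification of a point the paper leaves implicit.
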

\begin{proof}
	Let $W_e$ be an infinite c.e.~set.
	Since $A$ is $\Sigma^0_1$-dense, there exists an infinite c.e.~set $W_d$ such that $W_d\subseteq A\cap W_e$.
	Let $W_c=W_d\setminus (A\setminus B)$.
	Since $A\setminus B$ is finite, $W_c$ is an infinite c.e.~set.
	Since $W_d\subseteq A$, we have $W_c = W_d \cap (B\cup A^c) = W_d \cap B$.
	Then, since $W_d\subseteq W_e$, we have
	\(
		W_c \subseteq B \cap W_e,
	\)
	and we conclude that $B$ is $\Sigma^0_1$-dense.
	This proves (1). To prove (2), if $f$ witnesses that $A$ is constructively $\Sigma^0_1$-dense then a function $g$ with $W_{g(e)}=W_{f(e)}\setminus (A\setminus B)$ witnesses that $B$ is constructively $\Sigma^0_1$-dense.
\end{proof}

\begin{proposition}\label{obvious}
	Let $B$ be a co-finite set. Then $B$ is constructively $\Sigma^0_1$-dense.
\end{proposition}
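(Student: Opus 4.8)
The plan is to reduce this to the upward-closure result already in hand. First I would observe that $\omega$ itself is (trivially) constructively $\Sigma^0_1$-dense: the identity function $e \mapsto e$ is a witness, since $W_e \subseteq \omega \cap W_e = W_e$, and $W_{e}$ is infinite exactly when $W_e$ is infinite. Since $B$ is co-finite, $\omega \setminus B$ is finite, so $\omega \subseteq^* B$. Applying part \ref{two} of \Cref{upclosed} with $A = \omega$ then yields immediately that $B$ is constructively $\Sigma^0_1$-dense. This is the shortest route and costs essentially nothing beyond citing the lemma.

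If one prefers a self-contained argument, I would instead construct the witness $f$ directly. Let $F = \omega \setminus B$, a finite set with a fixed canonical index. By the s-m-n theorem there is a total computable $f$ with $W_{f(e)} = W_e \setminus F$ for every $e$. Then $W_{f(e)} = W_e \cap B \subseteq B \cap W_e$, and whenever $W_e$ is infinite, deleting the finitely many elements of $F$ leaves $W_{f(e)}$ infinite. Hence $f$ witnesses the constructive $\Sigma^0_1$-density of $B$.

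There is no genuine obstacle here: the statement is essentially a consistency check that the hierarchy of density notions behaves as expected at the top end. The only point that deserves a moment's care is the totality and uniformity of $f$ in the direct approach, namely that one deletes a single fixed finite set uniformly in $e$, which is precisely what s-m-n delivers. Via the upward-closure route even this bookkeeping is absorbed into the already-established \Cref{upclosed}, so I would present that version as the main proof and leave the direct construction as a remark.
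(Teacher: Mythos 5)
Your primary argument is exactly the paper's proof: note that $\omega$ is constructively $\Sigma^0_1$-dense via the identity function, then apply part~\ref{two} of \Cref{upclosed}. The direct s-m-n construction you sketch as a remark is also correct, but the main route you chose coincides with the paper's.
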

\begin{proof}
	The set $\omega$ is constructively $\Sigma^0_1$-dense as witnessed by the identity function $f(e)=e$.
	Thus by \Cref{two} of \Cref{upclosed}, $B$ is as well.
\end{proof}

As usual we write $A\oplus B = \{2x \mid x \in A\} \cup \{2x+1 \mid x \in B\}$.

\begin{proposition}\label{bjoernJan23}

1. If $X_0$ and $X_1$ are $\Sigma^0_1$-dense sets then so is $X_0\oplus X_1$.

2. If $X_0$ and $X_1$ are constructively $\Sigma^0_1$-dense sets then so is $X_0\oplus X_1$.
\end{proposition}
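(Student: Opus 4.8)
The plan is to reduce the $\Sigma^0_1$-density of $X_0\oplus X_1$ to that of the two factors by separating an arbitrary infinite c.e.\ set into its ``even half'' and ``odd half''. Given $W_e$, I would set $E_e=\{x : 2x\in W_e\}$ and $O_e=\{x : 2x+1\in W_e\}$. Both are c.e., uniformly in $e$, and since $W_e$ is infinite at least one of $E_e,O_e$ is infinite. The point is that an element of $X_0\oplus X_1$ lying in $W_e$ is either $2x$ with $x\in X_0\cap E_e$ or $2x+1$ with $x\in X_1\cap O_e$, so density of $X_0\oplus X_1$ along $W_e$ is exactly density of $X_0$ along $E_e$ together with density of $X_1$ along $O_e$.

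For part (1): if $E_e$ is infinite, then by $\Sigma^0_1$-density of $X_0$ there is an infinite c.e.\ $D\subseteq X_0\cap E_e$, and then $\{2x : x\in D\}$ is an infinite c.e.\ subset of $(X_0\oplus X_1)\cap W_e$; symmetrically, if $O_e$ is infinite I would use $X_1$ and the map $x\mapsto 2x+1$. Since one case always applies, this handles every infinite $W_e$, establishing (1).

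For part (2): the difficulty is that one cannot computably decide which of $E_e,O_e$ is infinite, so the case split used in (1) is not available uniformly. I would instead form \emph{both} candidates and take their union, guaranteeing that whichever half is infinite contributes infinitely. Let $f_0,f_1$ witness constructive $\Sigma^0_1$-density of $X_0,X_1$, and compute (by the s-m-n theorem) indices $a(e),b(e)$ with $W_{a(e)}=E_e$ and $W_{b(e)}=O_e$. Then define a total computable $g$ by
\[
	W_{g(e)} = \{2x : x\in W_{f_0(a(e))}\}\ \cup\ \{2x+1 : x\in W_{f_1(b(e))}\}.
\]
Every even element $2x$ of $W_{g(e)}$ has $x\in X_0\cap E_e$, so $2x\in(X_0\oplus X_1)\cap W_e$, and the odd elements are handled symmetrically; hence $W_{g(e)}\subseteq (X_0\oplus X_1)\cap W_e$. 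If $W_e$ is infinite then $E_e$ or $O_e$ is infinite, so the corresponding $W_{f_i(\cdot)}$ is infinite and forces $W_{g(e)}$ to be infinite. Thus $g$ witnesses that $X_0\oplus X_1$ is constructively $\Sigma^0_1$-dense.

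The only real obstacle is the uniformity in (2); once the ``build both halves and union'' idea is in place the verification is routine, and the same union construction simultaneously re-proves (1). I would double-check that $e\mapsto a(e),b(e)$ and the union-with-relabeling are genuinely effective (they are), and that the claimed inclusions hold exactly rather than merely up to a finite set.
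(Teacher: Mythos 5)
Your proof is correct and takes essentially the same approach as the paper: your even/odd halves $E_e$, $O_e$ are exactly the paper's decomposition $W_e=W_{c_0}\oplus W_{c_1}$, and your union $\{2x : x\in W_{f_0(a(e))}\}\cup\{2x+1 : x\in W_{f_1(b(e))}\}$ is precisely the paper's witness $W_{d_0(c_0)}\oplus W_{d_1(c_1)}$, since that union is by definition the join of the two sets. The only cosmetic difference is that in part (1) the paper uses the join of both halves (with the finite half contributing a possibly empty subset) instead of your case split, but as you yourself observe, your union construction re-proves (1) in the same way.
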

\begin{proof}
	Let $W_e=W_{c_0}\oplus W_{c_1}$ be an infinite c.e.~set.
	For $i=0,1$, since $X_i$ is $\Sigma^0_1$-dense there exists $W_{d_i}\subseteq X_i\cap W_{c_i}$ such that $W_{d_i}$ is infinite if $W_{c_i}$ is infinite.
	Then $W_{d_0}\oplus W_{d_1}$ is an infinite c.e.~subset of $(X_0\oplus X_1) \cap W_e$.

	This proves (1). To prove (2), if $d_i$ are now functions witnessing the effective $\Sigma^0_1$-density of $X_i$ then $W_{d_i(c_i)}\subseteq X_i\cap W_{c_i}$,
	and $W_{d_0(c_0)}\oplus W_{d_1(c_1)}$ is an infinite c.e.~subset of $(X_0\oplus X_1) \cap W_e$.
	Thus a function $g$ satisfying
	\[
		W_{g(e)}=W_{d_0(c_0)}\oplus W_{d_1(c_1)},
	\]
	where $W_e=W_{c_0}\oplus W_{c_1}$,
	witnesses the effective $\Sigma^0_1$-density of $X_0\oplus X_1$.
\end{proof}

\begin{theorem}\label{bjoern3:53}
	There is no $\Sigma^0_1$-dense set $A$ such that all $\Sigma^0_1$-dense sets $B$ satisfy $A\subseteq^* B$.
\end{theorem}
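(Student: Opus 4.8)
The plan is to show that no $\Sigma^0_1$-dense set is $\subseteq^*$-minimal: given an arbitrary $\Sigma^0_1$-dense set $A$, I will produce a $\Sigma^0_1$-dense set $B$ with $A\setminus B$ infinite, so that $A\subseteq^* B$ fails. The set $B$ will be obtained by deleting from $A$ a well-chosen infinite subset $S$, so that $A\setminus B = A\cap S = S$; the entire difficulty is to choose $S$ so that the deletion does not destroy $\Sigma^0_1$-density.

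First I would observe that every $\Sigma^0_1$-dense set is non-immune: applying the definition to $C=\omega$ yields an infinite c.e.\ set $D\subseteq A$, and since every infinite c.e.\ set has an infinite computable subset, $A$ contains an infinite \emph{computable} set $R$. Next, fixing a computable increasing enumeration $R=\{r_0<r_1<\cdots\}$ and a simple set $V$ (one exists by Post's construction), I would set $S=\{r_k : k\notin V\}$. Then $S\subseteq R\subseteq A$ is infinite (as $\overline V$ is infinite), $S$ is immune (an infinite c.e.\ subset of $S$ would pull back, along the computable bijection $r_k\mapsto k$, to an infinite c.e.\ subset of the immune set $\overline V$), and $S$ is co-c.e.\ (its complement $\overline R\cup\{r_k:k\in V\}$ is the union of a computable set and a c.e.\ set). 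Finally I would set $B=A\setminus S$, so that $A\setminus B=S$ is infinite by construction.

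It remains to check that $B=A\cap\overline S$ is $\Sigma^0_1$-dense. The key lemma is that if $S$ is immune then $D\setminus S$ is infinite for every infinite c.e.\ set $D$: otherwise $D\cap S=D\setminus(D\setminus S)$ would be $D$ minus a finite set, hence an infinite c.e.\ subset of $S$, contradicting immunity. So, given an infinite c.e.\ set $C$, I would use $\Sigma^0_1$-density of $A$ to obtain an infinite c.e.\ set $D\subseteq A\cap C$; then $D\cap\overline S$ is infinite by the lemma and c.e.\ because $S$ is co-c.e., and it is contained in $A\cap\overline S\cap C=B\cap C$, witnessing density. Since $A$ was an arbitrary $\Sigma^0_1$-dense set, this shows none lies $\subseteq^*$-below all the others.

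The main obstacle is precisely this requirement that removing an infinite set preserve $\Sigma^0_1$-density. A naive choice, such as deleting a computable or c.e.\ chunk, fails immediately: that chunk is itself an infinite c.e.\ test set $C$ on which $B$ then has empty intersection. This is what forces $S$ to be immune (so no infinite c.e.\ subset survives to be emptied) and simultaneously co-c.e.\ (so the witnessing subsets $D\cap\overline S$ remain c.e.). The tension between co-c.e.-ness of $S$ and the demand $S\subseteq A$---whose complement need only be co-c.e.---is exactly why the construction first descends to the infinite computable subset $R\subseteq A$, whose complement is computable.
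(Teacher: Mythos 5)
Your proof is correct, and it takes a genuinely different route from the paper's. Both arguments begin identically---extract an infinite computable set $R\subseteq A$ (the paper's $W_d$), then remove from $A$ a carefully chosen infinite subset of $R$---but they differ in how that subset is chosen. The paper takes a Mathias generic $G\subseteq R$ and invokes \Cref{mathias} to conclude that $G^c$ is $\Sigma^0_1$-dense, reaching the contradiction $G\subseteq^* G^c$ from $G\subseteq R\subseteq A\subseteq^* G^c$. You instead pull Post's simple set $V$ back into $R$, deleting $S=\{r_k: k\notin V\}$, and your two properties of $S$ (immunity, so $D\setminus S$ is infinite for every infinite c.e.\ $D$; co-c.e.-ness, so $D\cap\overline{S}$ is still c.e.) exactly suffice for the hands-on verification that $B=A\setminus S$ remains $\Sigma^0_1$-dense. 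It is worth noticing that your $\overline{S}=\overline{R}\cup\{r_k: k\in V\}$ is itself a simple set, so your key lemma is in effect a localized rederivation of the paper's \Cref{thm:simple} (simple sets are $\Sigma^0_1$-dense), after which $B=A\cap\overline{S}$ is dense by the intersection closure of \Cref{jan23}. What your route buys: it avoids forcing entirely, it is fully effective in the sense that the witness $B$ is obtained from $A$ by deleting a co-c.e.\ set (a Mathias generic is not in general arithmetically definable), and it isolates a reusable fact, namely that removing an immune co-c.e.\ set from a $\Sigma^0_1$-dense set preserves $\Sigma^0_1$-density. What the paper's route buys: since \Cref{mathias} is already established and needed elsewhere, the contradiction there is immediate, a three-line argument instead of a construction.
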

\begin{proof}
	Suppose there is such a set $A$. Let $W_d$ be an infinite computable subset of $A$.
	Let $G$ be a Mathias generic with $G\cap W_d^c=\emptyset$, i.e., $G\subseteq W_d$. Then $B:=G^c$ is $\Sigma^0_1$-dense by \Cref{mathias}.
	Thus $A\cap G^c$ is also $\Sigma^0_1$-dense by \Cref{jan23}.
	And $G \subseteq W_d \subseteq A$ and by assumption $A\subseteq^* G^c$ so we get $G\subseteq^* G^c$, a contradiction.
\end{proof}

These results show that the $\Sigma^0_1$-dense sets under $\subseteq^*$ form a non-principal filter whose Turing degrees form a join semi-lattice.

\begin{theorem}\label{thm:simple}
	Let $A$ be a c.e.~set.
	The following are equivalent:
	\begin{enumerate}
		\item $A$ is co-infinite and constructively $\Sigma^0_1$-dense.
		\item $A$ is co-infinite and $\Sigma^0_1$-dense.
		\item $A$ is co-immune.
	\end{enumerate}
\end{theorem}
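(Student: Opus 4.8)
The plan is to establish the cycle of implications $(1)\Rightarrow(2)\Rightarrow(3)\Rightarrow(1)$, leaning on \Cref{p:immune} and \Cref{xiang} to do the heavy lifting so that each individual arrow is short. The arrow $(1)\Rightarrow(2)$ is immediate: constructive $\Sigma^0_1$-density is a strengthening of $\Sigma^0_1$-density, since the computable function $f$ witnessing the former directly supplies, for each infinite $W_e$, the required infinite c.e.\ subset $W_{f(e)}\subseteq A\cap W_e$. The co-infiniteness hypothesis is carried over verbatim, so $(2)$ follows at once.

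For $(2)\Rightarrow(3)$ I would argue by contradiction on co-immunity. Co-infiniteness of $A$ already gives that $\overline A$ is infinite, so it remains to show $\overline A$ has no infinite c.e.\ subset. Suppose $W_e\subseteq\overline A$ were infinite and c.e. Applying $\Sigma^0_1$-density of $A$ to $W_e$ yields an infinite c.e.\ set $D\subseteq A\cap W_e$; but $W_e\subseteq\overline A$ forces $A\cap W_e=\emptyset$, a contradiction. Hence $\overline A$ is immune, i.e.\ $A$ is co-immune. Note that this direction uses neither the c.e.\ assumption on $A$ nor anything beyond the plain definition of $\Sigma^0_1$-density.

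For $(3)\Rightarrow(1)$ I would exploit the c.e.\ hypothesis, which is exactly what lets me invoke Xiang Li. Since $A$ is c.e.\ and co-immune it is \emph{simple}, so part~\ref{xiang-2} of \Cref{xiang} gives that $\overline A$ is constructively immune. Now I apply the forward direction of \Cref{p:immune} with $Z=\overline A$: constructive immunity of $\overline A$ yields both that $\overline A$ is infinite—so $A$ is co-infinite—and that $\omega\setminus\overline A=A$ is constructively $\Sigma^0_1$-dense. This is precisely statement~$(1)$, closing the cycle.

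There is no substantial obstacle here; the content has been front-loaded into the earlier equivalences. The only points requiring care are the bookkeeping with complements (applying \Cref{p:immune} to $\overline A$ rather than to $A$, and tracking that ``$\overline A$ infinite'' is the same as ``$A$ co-infinite'') and recognizing that the c.e.\ assumption on $A$ is used solely in $(3)\Rightarrow(1)$, where it converts ``co-immune'' into ``simple'' so that Xiang Li's theorem becomes applicable.
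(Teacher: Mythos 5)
Your proof is correct and follows exactly the paper's route: the paper also proves the cycle $(1)\Rightarrow(2)\Rightarrow(3)\Rightarrow(1)$, declaring the first two arrows immediate from the definitions and obtaining $(3)\Rightarrow(1)$ from \Cref{p:immune} together with \Cref{xiang}. You have simply written out the details (the contradiction argument for $(2)\Rightarrow(3)$ and the bookkeeping with $Z=\overline A$) that the paper leaves implicit.
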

\begin{proof}
$1\implies 2\implies 3$ is immediate from the definitions, and $3\implies 1$ is immediate from \Cref{p:immune} and \Cref{xiang}.
\end{proof}

\begin{theorem}
	Every c.e.~Turing degree contains a constructively $\Sigma^0_1$-dense set.
\end{theorem}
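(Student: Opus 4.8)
The plan is to reduce the statement to the classical fact that every nonzero c.e.\ degree contains a simple set, handling the zero degree separately. Recall that a simple set is by definition a co-immune c.e.\ set, so once we have a simple set $S$ in a given degree, \Cref{thm:simple} (the equivalence, for c.e.\ sets, of co-immunity with the conjunction of co-infiniteness and constructive $\Sigma^0_1$-density) tells us immediately that $S$ is constructively $\Sigma^0_1$-dense.

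For the degree $\mathbf 0$ I would take $B=\omega$: it is co-finite, hence constructively $\Sigma^0_1$-dense by \Cref{obvious}, and it is computable, so it realizes $\mathbf 0$. (A simple set cannot be used here, since simple sets are noncomputable.) For a nonzero c.e.\ degree $\mathbf d$ the task is therefore to produce a simple set of degree exactly $\mathbf d$.

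To do this I would use Dekker's deficiency-set construction. Fix a noncomputable c.e.\ set $A\in\mathbf d$ (necessarily infinite and co-infinite) together with a one-to-one computable enumeration $a_0,a_1,a_2,\dots$ of $A$, and let
\[
	D=\{\,s:(\exists t>s)\,(a_t<a_s)\,\}
\]
be the set of \emph{deficiency stages}; this $D$ is c.e.\ by construction. The governing observation is that if $s\notin D$ (a nondeficiency stage) then every element of $A$ enumerated after stage $s$ exceeds $a_s$, and since the $a_s$ tend to infinity the values $a_s$ at nondeficiency stages are arbitrarily large, so $\overline D$ is infinite. I would then check two things. First, $\overline D$ is immune: an infinite c.e.\ subset of $\overline D$ would contain an infinite computable subset $R$, and for any $n$ we could computably find $r\in R$ with $a_r>n$, whence $n\in A$ iff $n$ appears among $a_0,\dots,a_r$, making $A$ computable, a contradiction. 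Thus $D$ is simple. Second, $D\equiv_T A$: from $A$ we can decide whether $s\in D$ by locating, via the oracle, the finitely many elements of $A$ below $a_s$ and checking whether any of them is enumerated after stage $s$; conversely, from $D$ we can decide $n\in A$ by using the oracle to find a nondeficiency stage $r$ with $a_r>n$ and then checking whether $n$ occurs among $a_0,\dots,a_r$. Taking $S=D$ completes the nonzero case.

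The main obstacle is the degree computation $D\equiv_T A$ together with the use of noncomputability in establishing immunity; both hinge on the single ``a nondeficiency stage bounds all future values'' observation, so the real work is setting that up cleanly and verifying that the search for a suitable nondeficiency stage is effective relative to the appropriate oracle. If one prefers to avoid the deficiency set, an alternative is a finite-injury construction of a simple set using $A$-permitting (to force $S\le_T A$) together with direct coding of $A$ into $S$ (to force $A\le_T S$), where the balancing of permitting against coding becomes the delicate point.
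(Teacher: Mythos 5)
Your proposal is correct and follows essentially the same route as the paper: the degree $\mathbf 0$ is handled by a co-finite set via \Cref{obvious}, and any nonzero c.e.\ degree by a simple set in that degree combined with \Cref{thm:simple}. The only difference is that the paper simply cites the classical fact that every nonzero c.e.\ degree contains a simple set (Soare), whereas you prove it in full via Dekker's deficiency-set construction, which is a correct (and standard) proof of that cited fact.
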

\begin{proof}
	Let $\mathbf a$ be a c.e.~degree.
	If $\mathbf a>\mathbf 0$ then $\mathbf a$ contains a simple set $A$, see, e.g., \cite{Soare}, so \Cref{thm:simple} finishes this case.
	The degree $\mathbf 0$ contains all the co-finite sets, which are constructively $\Sigma^0_1$-dense by \Cref{obvious}.
\end{proof}

\subsection{Cofinality in the Turing degrees of constructive $\Sigma^0_1$-density}
\begin{definition}\label{def_ve}
	For $k\ge 0$, let $I_k$ be intervals of length $k+2$ such that $\min(I_0)=0$ and $\max(I_k)+1=\min(I_{k+1})$.
	
	Let $V_e=\bigcup_{s\in\omega}V_{e,s}$ be a subset of $W_e$ defined by the condition that
	$x\in I_k$ enters $V_e$ at a stage $s$ where $x$ enters $W_e$ if
	this makes $\abs{V_{e,s}\cap I_k}\le 1$, and
	for all $j>k$, $V_{j,s} \cap I_k=\emptyset$.
\end{definition}

\begin{lemma}\label{dec31}
	There exists a c.e., co-infinite, constructively $\Sigma^0_1$-dense, and effectively co-immune set.
\end{lemma}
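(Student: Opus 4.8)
The goal is to build a single c.e. set $A$ that simultaneously exhibits four properties: it must be co-infinite, constructively $\Sigma^0_1$-dense (equivalently, by \Cref{p:immune}, its complement is constructively immune), and effectively co-immune (its complement $\overline{A}$ is effectively immune). The key design insight is already laid out in \Cref{def_ve}: the sets $V_e$ thin out each $W_e$ so that within each block $I_k$ at most one element is ever contributed by the ``lowest-priority-so-far'' witness, while higher-indexed $V_j$ are barred from interfering. I would define $A=\bigcup_e V_e$, so that $A$ is c.e., and then verify the four requirements in turn against this single construction.

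First I would establish co-infiniteness. Because $I_k$ has length $k+2$ and the entry condition admits at most one element of $V_e$ per block for each relevant $e$, a counting argument over the finitely many $V_j$ that are permitted to act on $I_k$ (namely $j\le k$) shows that each block $I_k$ retains at least one element outside $A$; collecting one such witness per block yields an infinite subset of $\overline{A}$. Next, for constructive $\Sigma^0_1$-density, I would exhibit a computable $f$ with $W_{f(e)}\subseteq A\cap W_e$ infinite whenever $W_e$ is; the natural choice is $W_{f(e)}=V_e$, since by \Cref{def_ve} every element of $V_e$ lies in $W_e$ and in $A$, and one argues that $V_e$ captures one element per block $I_k$ for all sufficiently large $k$ (once $k\ge e$, the ``$j>k$'' restriction no longer blocks $V_e$), so $V_e$ is infinite when $W_e$ is infinite. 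This is exactly the shape required by the second clause of \Cref{def_eff_susc}.

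For effective co-immunity I would produce a computable bound $h$ with $\abs{W_e}\le h(e)$ whenever $W_e\subseteq\overline{A}$. The leverage here is that if $W_e\subseteq\overline{A}$ then no element of $W_e$ ever entered $A=\bigcup_j V_j$; but once $k$ is large enough that $k\ge e$ and no higher-priority block constraint applies, the first element of $W_e$ appearing in $I_k$ would have been drafted into $V_e$ (hence into $A$) unless $I_k$ was already occupied. Quantifying this forces $W_e$ to be confined to the finitely many early blocks, giving an explicit bound such as $h(e)=\max(I_e)+1$ or a similar computable expression in $e$. The main obstacle, and where I expect the real work to lie, is the priority bookkeeping in \Cref{def_ve}: the clause requiring $V_{j,s}\cap I_k=\emptyset$ for all $j>k$ couples the construction across indices, so I must check that these conditions are mutually consistent stage by stage (that a draft for $V_e$ into $I_k$ is never retroactively invalidated) and simultaneously tight enough to force both the co-immunity bound and the infinitude of each $V_e$. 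Reconciling the ``at most one per block'' ceiling, which drives co-infiniteness and the effective immunity bound, against the ``eventually one per block'' floor, which drives $\Sigma^0_1$-density, is the delicate balance the interval lengths $k+2$ are engineered to achieve.
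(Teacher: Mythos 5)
Your proposal is correct and takes essentially the same approach as the paper's own proof: you define $A=\bigcup_e V_e$ exactly as the paper does, establish co-infiniteness by the same counting argument ($\abs{A\cap I_k}\le k+1 < \abs{I_k}$, since only $V_j$ with $j\le k$ may act on $I_k$), use $V_e$ itself as the witness $W_{f(e)}$ for constructive $\Sigma^0_1$-density, and obtain effective co-immunity by confining any $W_e$ disjoint from $A$ to the finite set $\bigcup_{k<e} I_k$. The bookkeeping worries you flag at the end are benign under the intended reading of \Cref{def_ve} (no $V_j$ with $j>k$ ever receives an element of $I_k$, so a draft into $V_e$ is never invalidated), and the paper's proof passes over these points just as briefly.
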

\begin{proof}
	Let $A=\bigcup_{e\in\omega} V_e$.
	$V_e$ is c.e. by construction, and if $W_e$ is infinite, $V_e$ is also infinite.
	So $V_e=W_{f(e)}$ is the set witnessing that $A$ is constructively $\Sigma^0_1$-dense.

	Moreover $A$ is coinfinite since $\abs{A\cap I_k}\le k+1 < k+2 = \abs{I_k}$ gives $I_k\not\subseteq A$ for each $k$ and
	\[
		\abs{\omega\setminus A} =\left| \left(\bigcup_{k\in\omega}I_k\right) \setminus A \right|
		= \left| \bigcup_{k\in\omega} (I_k\setminus A)\right|
		= \sum_{k\in\omega} \abs{I_k\setminus A} \ge \sum_{k\in\omega} 1 = \infty.
	\]
	The set $A$ is effectively co-immune because if $W_e$ is disjoint from $A$ then
	since as soon as a number in $I_k$ for $k\ge e$ enters $W_e$ then that number is put into $A$,
	$W_e \subseteq \bigcup_{k<e}I_k$ so $\abs{W_e}\le \sum_{k<e} (k+2) = \sum_{k\le e+1}k = \frac{(e+1)(e+2)}2$.
\end{proof}

\begin{theorem}
	For each set $R$ there exists a constructively $\Sigma^0_1$-dense, effectively co-immune set $S$ with $R\le_T S$.
\end{theorem}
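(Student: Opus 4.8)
The plan is to take the set $A$ built in \Cref{dec31} and enlarge it to a set $S\supseteq A$ that encodes $R$ by filling in whole intervals. The point of keeping $A\subseteq S$ is that it hands us two of the three desired properties for free: since $A$ is constructively $\Sigma^0_1$-dense, \Cref{two} of \Cref{upclosed} immediately gives that $S$ is constructively $\Sigma^0_1$-dense; and since $S^c\subseteq A^c$ while $A$ is effectively co-immune, any $W_e\subseteq S^c$ lies in $A^c$ and is therefore bounded by the same computable witness, so $S$ is effectively co-immune as soon as we make sure $S^c$ is infinite. All the real work thus reduces to choosing which elements of $A^c$ to adjoin so that simultaneously $S^c$ stays infinite and $R\le_T S$.

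Concretely, I would work with the intervals $I_k$ of \Cref{def_ve}, for which the construction guarantees $\abs{A\cap I_k}\le k+1<k+2=\abs{I_k}$, so that no interval is ever completely swallowed by $A$. I would reserve the odd-indexed intervals to force co-infiniteness and use the even-indexed intervals to carry the bits of $R$, setting
\[
	S \;=\; A\;\cup\;\bigcup_{k\in R} I_{2k}.
\]
Because the intervals are pairwise disjoint and we adjoin an entire block only for $k\in R$, we get $S\cap I_{2k}=I_{2k}$ when $k\in R$, and $S\cap I_{2k}=A\cap I_{2k}\subsetneq I_{2k}$ when $k\notin R$, the strict inclusion being exactly the slack $\abs{A\cap I_{2k}}\le 2k+1<\abs{I_{2k}}$. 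Hence $k\in R$ if and only if the finite block $I_{2k}$ is entirely contained in $S$, a condition decidable from the oracle $S$, which yields $R\le_T S$. Infinitude of $S^c$ is automatic because we never touch an odd interval: $S^c\supseteq\bigcup_k (I_{2k+1}\setminus A)$ contains a point of each $I_{2k+1}$ and so is infinite.

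The step I expect to be the main obstacle is the temptation to code $R$ at a fixed, computably described family of ``free'' cells inside $A^c$; this cannot work, since $A^c$ is immune and hence contains no infinite computable subset, so no such decoding family exists. The resolution, which is the conceptual heart of the argument, is to decode instead via the oracle-checkable predicate ``is the whole block $I_{2k}$ present in $S$?'', which leverages precisely the guaranteed gap $\abs{A\cap I_k}<\abs{I_k}$ supplied by \Cref{dec31}. Once this coding is fixed, the three verifications—constructive $\Sigma^0_1$-density through \Cref{upclosed}, effective co-immunity through $S^c\subseteq A^c$, and infinitude of $S^c$ through the untouched odd intervals—are all routine.
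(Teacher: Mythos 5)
Your proof is correct and follows essentially the same route as the paper: enlarge the set $A$ of \Cref{dec31} by adjoining whole intervals $I_k$ to code $R$, then get constructive $\Sigma^0_1$-density from \Cref{upclosed} and effective co-immunity from the containment $S^c\subseteq A^c$. The only difference is cosmetic: the paper assumes without loss of generality that $R$ is co-infinite and codes on all intervals, whereas you code on the even-indexed intervals so that the untouched odd intervals keep $S^c$ infinite for arbitrary $R$.
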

\begin{proof}
	Let $R$ be any set, which we may assume is co-infinite.
	Let $A$ be as in the proof of \Cref{dec31}.
	Let $S\supseteq A$ be defined by
	\[
		S = A \cup \bigcup_{k\in R} I_k.
	\]
	Since $A\subseteq S$ and $S$ is co-infinite, $S$ is constructively $\Sigma^0_1$-dense and effectively co-immune.
	Since
	$k\in R\iff I_k\subseteq S$,
	we have $R\le_T S$.
\end{proof}

\subsection{Non-$\Delta^0_2$ degrees}

\begin{lemma}\label{jan26pigeon}
		Suppose that $T\subseteq 2^{<\omega}$ is a tree with only one infinite path.
		Then for each length $n$ there exists a length $m>n$ such that exactly one string of length $n$ has an extension of length $m$ in $T$.
\end{lemma}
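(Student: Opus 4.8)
The plan is to exploit the uniqueness of the infinite path together with König's Lemma. Let $X\in 2^\omega$ denote the unique infinite path through $T$, and fix a length $n$. I would first record that among the strings of length $n$, the only one lying on $X$ is $X\restrict n$, and that it alone can have extensions in $T$ of arbitrarily large length: since $X$ is a path through $T$, we have $X\restrict m\in T$ for every $m\ge n$, so $X\restrict n$ has an extension of every length $m>n$ in $T$. Also note that $X\restrict n\in T$, so at least one length-$n$ string is present.

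Next I would show that every \emph{other} length-$n$ string contributes extensions only up to a bounded length. Consider a string $\sigma$ of length $n$ with $\sigma\neq X\restrict n$ and $\sigma\in T$; strings of length $n$ not in $T$ can be ignored, since by prefix-closure of the tree any extension of such a string would force the string itself into $T$. The side-tree $T_\sigma=\{\tau\in T:\sigma\preceq\tau\}$ can have no infinite path, for such a path $Y$ would be an infinite path through $T$ with $Y\restrict n=\sigma\neq X\restrict n$, hence $Y\neq X$, contradicting the uniqueness of $X$. Since $T\subseteq 2^{<\omega}$ is finitely (indeed $2$-) branching, König's Lemma gives that $T_\sigma$ is finite, so there is a maximal length $\ell_\sigma$ of a string of $T_\sigma$.

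Finally I would assemble the bounds. There are at most $2^n$ strings of length $n$ in $T$ other than $X\restrict n$; let $M$ be the maximum of the finitely many values $\ell_\sigma$ (set $M=n$ if there are no such $\sigma$), and put $m=M+1$. Each off-path $\sigma$ then has extensions in $T$ only of length $\le M<m$, whereas $X\restrict n$ has the extension $X\restrict m\in T$. Hence exactly one string of length $n$, namely $X\restrict n$, has an extension of length $m$ in $T$. Since each off-path $\sigma$ already has length $n$, we have $\ell_\sigma\ge n$ and therefore $m=M+1>n$, as required.

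I expect the only point needing care to be the reduction to König's Lemma: one must check that an infinite path through a side-tree $T_\sigma$ really does produce an infinite path of $T$ distinct from $X$, and that prefix-closure justifies discarding the length-$n$ strings outside $T$. Both are routine once stated, so I do not anticipate a serious obstacle.
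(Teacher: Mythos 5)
Your proof is correct, and it takes a recognizably different route from the paper's, though both rest on the same two ingredients: uniqueness of the infinite path and K\"onig's lemma (compactness). The paper argues by contradiction: if the claim fails at $n$, then for every $m>n$ at least two length-$n$ strings have extensions of length $m$ in $T$; since there are only finitely many pairs of length-$n$ strings, the pigeonhole principle fixes a single pair $(\sigma,\tau)$ that recurs for infinitely many $m$, and compactness then extends both $\sigma$ and $\tau$ to infinite paths of $T$, contradicting uniqueness. You instead argue directly: each length-$n$ string $\sigma\ne X\restrict n$ lying in $T$ heads a side-tree $T_\sigma$ with no infinite path, hence finite by the contrapositive of K\"onig's lemma, and you take $m$ one past the maximum height of these finitely many finite side-trees. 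What your organization buys is an explicit witness $m$ and no pigeonhole step; it also makes explicit a point the paper glosses over, namely that negating ``exactly one'' yields ``at least two'' only because $X\restrict n$ always has extensions of every length. The cost is a little extra bookkeeping (discarding length-$n$ strings outside $T$ via prefix-closure, and the degenerate case where $X\restrict n$ is the only length-$n$ string in $T$), all of which you handle correctly. Both arguments use that $T$ is closed under prefixes, which is the standard reading of ``tree'' and the one in force where the paper applies this lemma.
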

\begin{proof}
Suppose not, i.e., there is a length $n$ such that for all $m>n$ there are at least two strings $\sigma_m,\tau_m$ of length $n$ with extensions of length $m$ in $T$. By the pigeonhole principle there is a pair $(\sigma,\tau)$ that is a choice of $(\sigma_m,\tau_m)$ for infinitely many $m$. Then by compactness both $\sigma$ and $\tau$ must be extendible to infinite paths of $T$.
\end{proof}

\begin{lemma}\label{jan26}
	Suppose that $T\subseteq 2^{<\omega}$ is a tree with only one infinite path $A$, and that $T$ is a c.e.~set of strings. Then $A$ is $\Delta^0_2$.
\end{lemma}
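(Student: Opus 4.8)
The plan is to show $A\le_T\emptyset'$, which is equivalent to $A$ being $\Delta^0_2$. The conceptual heart of the argument is that, since $A$ is the \emph{only} infinite path, for every length $n$ the initial segment $A\restrict n$ is the unique string of length $n$ that is extendible in $T$ to arbitrarily long lengths. In particular $A\restrict n$ has an extension of \emph{every} length $m\ge n$ in $T$. I would combine this observation with \Cref{jan26pigeon} to pin down $A\restrict n$ using only finitely many $\Sigma^0_1$ queries, and then invoke $\emptyset'$ to answer them.

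First I would note that the relation ``$\sigma$ has an extension of length $m$ in $T$'' is $\Sigma^0_1$ uniformly in $(\sigma,m)$: because $T$ is c.e., one enumerates $T$ and waits for some $\tau\supseteq\sigma$ with $\abs{\tau}=m$ to appear. Hence this relation is decidable relative to $\emptyset'$. Since there are only $2^n$ strings of length $n$, the oracle $\emptyset'$ can decide, for any given pair $(n,m)$, exactly which strings of length $n$ have an extension of length $m$ in $T$, and in particular whether \emph{exactly one} of them does. Thus the predicate appearing in \Cref{jan26pigeon} is $\emptyset'$-decidable.

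Next I would describe the computation of $A\restrict n$ from $\emptyset'$. On input $n$, search through $m=n+1,n+2,\dots$, and for each $m$ use $\emptyset'$ to test whether exactly one string of length $n$ has an extension of length $m$ in $T$. By \Cref{jan26pigeon} such an $m$ exists, so the search halts. For the first witnessing $m$, output the unique string $\sigma$ of length $n$ having an extension of length $m$ in $T$. Since $A\restrict n$ is extendible to every length, it is in particular among the strings of length $n$ that are extendible to length $m$; as there is exactly one such string, we must have $\sigma=A\restrict n$. Running this uniformly in $n$ recovers all of $A$, so $A\le_T\emptyset'$ and $A$ is $\Delta^0_2$.

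The only real subtlety is the correctness of the search, namely that the \emph{first} witnessing $m$ already yields $A\restrict n$ rather than some spurious candidate. This is guaranteed precisely because $A\restrict n$ lies on the unique infinite path and therefore survives as an extendible string at every length $m$, which forces it to be the unique witness whenever uniqueness occurs. Everything else reduces to the routine fact that ``exactly one of finitely many $\Sigma^0_1$ conditions holds'' is decidable with the oracle $\emptyset'$.
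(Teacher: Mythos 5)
Your proposal is correct and takes essentially the same approach as the paper: both invoke \Cref{jan26pigeon} and then use $\emptyset'$ to decide the $\Sigma^0_1$ extendibility queries, searching for the least $m$ at which exactly one string of length $n$ is extendible to length $m$ in $T$, that string necessarily being $A\restrict n$. You simply spell out the details (uniform $\Sigma^0_1$-ness of the queries and why the first witnessing $m$ cannot produce a spurious candidate) that the paper's proof leaves implicit.
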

\begin{proof}
	By \Cref{jan26pigeon}, for each length $n$ there exists a length $m>n$ such that exactly one string of length $n$ has an extension of length $m$ in $T$.
	Using $0'$ as an oracle we can find that $m$ and define $A\upharpoonright n$ by looking for such a string.
	In fact, $T\le_T 0'$ and so its unique path $A\le_T 0'$ as well.
\end{proof}

\begin{theorem}\label{prevalent}
	Given $A \in 2^{\omega}$, let $\hat{A} := \left\{\sigma \in 2^{<\omega} \mid \sigma \prec A \right\}$ be the set of finite prefixes of $A$.
	If $A$ is not $\Delta^0_2$ then $\hat{A}$ is co-$\Sigma^0_1$-dense. 
\end{theorem}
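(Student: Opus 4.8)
The plan is to argue by contraposition, using \Cref{jan26}: assuming $\hat A$ is not co-$\Sigma^0_1$-dense -- i.e.\ that its complement $2^{<\omega}\setminus\hat A$ (the strings that are \emph{not} prefixes of $A$) is not $\Sigma^0_1$-dense -- I will produce a c.e.\ tree whose unique infinite path is $A$ and conclude that $A$ is $\Delta^0_2$. Unwinding \Cref{def_eff_susc}, the failure of $\Sigma^0_1$-density gives an infinite c.e.\ set $C\subseteq 2^{<\omega}$ with the property that \emph{no} infinite c.e.\ subset of $C$ is disjoint from $\hat A$; every infinite c.e.\ $D\subseteq C$ must contain a prefix of $A$. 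The whole proof consists of squeezing a unique-path tree out of this one witness $C$.

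First I would let $T$ be the prefix-closure $\{\tau\in 2^{<\omega} : (\exists\sigma\in C)\,\tau\preceq\sigma\}$, which is c.e.\ and closed under initial segments, hence a c.e.\ tree. A preliminary observation is that $C\cap\hat A$ is infinite: otherwise $C$ with its finitely many prefixes of $A$ removed would be an infinite c.e.\ subset of $C$ disjoint from $\hat A$, against the choice of $C$. Since $C$ then contains prefixes $A\upharpoonright m$ of arbitrarily large length, every $A\upharpoonright n$ is an initial segment of some element of $C$ and so lies in $T$; thus $A$ is an infinite path of $T$.

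The main step -- and the one I expect to be delicate -- is showing that $A$ is the \emph{only} infinite path of $T$, for this is exactly where the witnessing property of $C$ must be spent. Suppose $B\neq A$ were another infinite path, and let $\ell$ be least with $B(\ell)\neq A(\ell)$, so that $B\upharpoonright(\ell+1)$ already disagrees with $A$ and hence it and all of its extensions lie off the path. Put $D_B:=\{\sigma\in C : \sigma\succeq B\upharpoonright(\ell+1)\}$. This set is c.e.\ (it is $C$ intersected with the computable collection of extensions of a fixed finite string), and it is disjoint from $\hat A$ because every member extends the off-path node $B\upharpoonright(\ell+1)$ and off-path-ness is upward closed. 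Finally $D_B$ is infinite: $B$ being a path of $T$ forces every $B\upharpoonright n$ to have an extension in $C$, and for $n>\ell$ such an extension lies in $D_B$ and has length at least $n$. Thus $D_B$ is an infinite c.e.\ subset of $C$ disjoint from $\hat A$, contradicting the choice of $C$; so no competing path $B$ exists.

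Having shown that $T$ is a c.e.\ tree with unique infinite path $A$, I would finish by invoking \Cref{jan26} to obtain $A\le_T 0'$, i.e.\ $A$ is $\Delta^0_2$ -- the contrapositive of the statement. The only real content beyond bookkeeping is the uniqueness argument of the previous paragraph: the hypothesis on $C$ rules out precisely those infinite c.e.\ subsets that dodge the path, and any rival branch $B$ concentrates infinitely much of $C$ above a single off-path node $B\upharpoonright(\ell+1)$, producing exactly such a forbidden subset.
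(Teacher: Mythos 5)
Your proof is correct and follows essentially the same route as the paper's: the same prefix-closure tree, the same appeal to \Cref{jan26}, and the same witness set (the portion of the given c.e.\ set lying above a node that is off the path $A$). The only difference is organizational --- the paper argues directly for each $W_e$ via a case split, which lets it skip your preliminary verification that $A$ is a path of $T$, whereas you run the argument contrapositively from a single counterexample $C$; the mathematical content is identical.
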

\begin{proof}
	Let $A^*$ be the complement of $\hat A$.
	Let $W_e\subseteq 2^{<\omega}$ be an infinite c.e.~set of strings.
	Let $T$ be the set of all prefixes of elements of $W_e$.
	Then $T$ is an infinite tree, hence by compactness it has at least one infinite path.
	That is, there is at least one real $B$ such that all its prefixes are in $T$.

	\indent Case 1: The only such real is $B=A$. Then
	by \Cref{jan26}, $A$ is $\Delta^0_2$.

	\indent Case 2: There is a $B\ne A$ such that all its prefixes are in $T$.
	Let $\sigma$ be a prefix of $B$ that is not a prefix of $A$.
	Let $W_d=[\sigma]\cap W_e$.
	Since all prefixes of $B$ are prefixes of elements of $W_e$,
	there are infinitely many extensions of $\sigma$ that are prefixes of elements of $W_e$.
	Consequently $W_d$ is infinite.
	Thus, $W_d$ is our desired infinite subset of $A^*\cap W_e$.
\end{proof}

\subsection{High degrees}
\begin{definition}
	A set $A$ is \emph{co-r-cohesive} if its complement is r-cohesive. This means that for each computable (recursive) set $W_d$,
	either $W_d\subseteq^* A$ or $W_d^c\subseteq^* A$.
\end{definition}

\begin{definition}[{Odifreddi \cite[Exercise III.4.8]{MR982269}, Jockusch and Stephan \cite{MR1270396}}]
	A set $A$ is strongly hyperhyperimmune (s.h.h.i.) if for each computable $f:\omega\to\omega$ for which the sets $W_{f(e)}$ are disjoint,
	there is an $e$ with $W_{f(e)}\subseteq \omega\setminus A$.

	A set $A$ is strongly hyperimmune (s.h.i.) if for each computable $f:\omega\to\omega$ for which the sets $W_{f(e)}$ are disjoint and computable,
	with $\bigcup_{e\in\omega}W_{f(e)}$ also computable,
	there is an $e$ with $W_{f(e)}\subseteq \omega\setminus A$.
\end{definition}

\begin{proposition}\label{thm:shi}
	Every s.h.i.~set is co-$\Sigma^0_1$-dense.
\end{proposition}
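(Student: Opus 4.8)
The plan is to unfold the definition of co-$\Sigma^0_1$-density and reduce it directly to the strong hyperimmunity hypothesis. As in \Cref{prevalent}, a set $A$ is \emph{co-$\Sigma^0_1$-dense} exactly when its complement $\omega\setminus A$ is $\Sigma^0_1$-dense; so I must show that every infinite c.e.\ set $C$ contains an infinite c.e.\ subset $D$ with $D\cap A=\emptyset$.

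First I would pass from $C$ to an infinite \emph{computable} subset $C'\subseteq C$, which exists for any infinite c.e.\ set (enumerate $C$ and retain only those elements larger than all previously enumerated ones). This step is what reconciles the merely c.e.\ datum $C$ with the computability demands built into the s.h.i.\ definition, which insists that the sets $W_{f(e)}$ and their union be computable.

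Next I would carve $C'$ into infinitely many infinite computable pieces. Writing $C'=\{c_0<c_1<c_2<\cdots\}$ in increasing order and fixing a computable pairing $\langle\cdot,\cdot\rangle\colon\omega\times\omega\to\omega$, set $B_e=\{c_{\langle e,k\rangle}:k\in\omega\}$. Each $B_e$ is infinite and computable uniformly in $e$, the $B_e$ are pairwise disjoint, and $\bigcup_e B_e=C'$ is computable. By the $s$-$m$-$n$ theorem choose a computable $f$ with $W_{f(e)}=B_e$. The family $\{W_{f(e)}\}_{e\in\omega}$ then satisfies precisely the hypotheses in the definition of s.h.i.\ (disjoint, computable, with computable union), so strong hyperimmunity of $A$ yields some $e_0$ with $W_{f(e_0)}\subseteq\omega\setminus A$.

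Because every $B_e$ was arranged to be infinite, $D:=W_{f(e_0)}=B_{e_0}$ is an infinite c.e.\ (indeed computable) subset of $C'\subseteq C$ disjoint from $A$, which is exactly the witness required for $\Sigma^0_1$-density of $\omega\setminus A$. The argument is otherwise a direct reduction, and the one point that genuinely needs care is the infiniteness of the pieces: if some $B_e$ were permitted to be finite or empty, then $W_{f(e_0)}\subseteq\omega\setminus A$ could hold vacuously and would fail to furnish an infinite subset. The splitting via the pairing function is precisely the device that guarantees the piece handed to us by the s.h.i.\ property is infinite.
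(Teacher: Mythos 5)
Your proof is correct and follows essentially the same route as the paper's: pass to an infinite computable subset of the given c.e.\ set, effectively split it into infinitely many disjoint infinite computable pieces with computable union, and invoke the s.h.i.\ property to obtain one piece lying in $\omega\setminus A$. The only cosmetic difference is the indexing of the splitting (a pairing function on positions versus the paper's $n=2^i(2k+1)$ scheme), and your closing remark about why infiniteness of each piece matters is a fair point that the paper leaves implicit.
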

\begin{proof}
	Let $A$ be s.h.i.
	Let $W_e$ be an infinite c.e.~set. Let $W_d$ be an infinite computable subset of $W_e$.
	Effectively decompose $W_d$ into infinitely many disjoint infinite computable sets,
	\[
		W_d = \bigcup_{i\in\omega} W_{g(d,i)}.
	\]
	For instance, if $W_d=\{a_0 < a_1 < \dots\}$ then we may let
	$W_{g(e,i)}=\{a_n: n=2^i(2k+1), i\ge 0, k\ge 0\}$.
	Since $A$ is s.h.i., there exists some $i_e$ such that $W_{g(d,i_e)}\subseteq A^c$.
	The sets $W_{g(d,i_e)}$ witness that $A^c$ is $\Sigma^0_1$-dense.
\end{proof}

Clearly r-cohesive implies s.h.i., and s.h.h.i. implies s.h.i.
It was shown by Jockusch and Stephan \cite[Corollary 2.4]{MR1270396} that the
cohesive degrees coincide with the r–cohesive degrees and (Corollary 3.10) that the s.h.i.~and s.h.h.i.~degrees coincide.

\begin{proposition}
	Every high degree contains a $\Sigma^0_1$-dense set.
\end{proposition}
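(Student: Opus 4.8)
The plan is to reduce the statement to the classical theory of cohesive degrees and then chain together the implications already assembled in this subsection. The single nontrivial external input is the characterization, due to Jockusch and Stephan \cite{MR1270396}, of the degrees of cohesive (equivalently r-cohesive) sets as exactly the degrees $\mathbf a$ with $\mathbf a' \ge \mathbf 0''$. Everything else is a concatenation of results stated above.

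First I would record that every high degree $\mathbf a$ satisfies $\mathbf a' \ge \mathbf 0''$: if $\mathbf a \le \mathbf 0'$ with $\mathbf a' = \mathbf 0''$ this is immediate, and the same inequality persists under the generalized notion of highness since $\mathbf a \vee \mathbf 0' \ge \mathbf 0'$ forces $\mathbf a' \ge \mathbf 0''$. Hence by the Jockusch--Stephan characterization $\mathbf a$ contains a cohesive set $C$. Such a $C$ is automatically r-cohesive, because every computable set is c.e., so the cohesiveness splitting condition for c.e. sets specializes to the r-cohesiveness condition for computable sets.

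Next I would push $C$ through the chain of implications noted in the text. Since $C$ is r-cohesive it is s.h.i., and by \Cref{thm:shi} the complement of an s.h.i. set is $\Sigma^0_1$-dense; thus $C^c$ is $\Sigma^0_1$-dense. Finally, $C^c \equiv_T C$, so $C^c$ lies in $\mathbf a$ and exhibits a $\Sigma^0_1$-dense set of the required degree. Note that it is the \emph{complement} of the cohesive witness, not the witness itself, that is $\Sigma^0_1$-dense.

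The only place where real content is used is the Jockusch--Stephan result that a degree with jump at least $\mathbf 0''$ contains a cohesive set; once that is invoked, the remainder is a direct application of \Cref{thm:shi}, the r-cohesive $\Rightarrow$ s.h.i. implication recorded above, and the triviality that complementation preserves Turing degree. Accordingly I expect the main (and essentially the sole) obstacle to be correctly citing and applying that characterization, with a brief caveat that the relevant inequality $\mathbf a' \ge \mathbf 0''$ covers whichever precise definition of ``high'' one adopts.
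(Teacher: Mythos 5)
Your argument reaches the same endpoint as the paper --- an s.h.i.\ set whose complement is the desired $\Sigma^0_1$-dense set via \Cref{thm:shi}, together with $C^c\equiv_T C$ --- but by a genuinely different and more uniform route. The paper splits into two cases: for $\mathbf h\not\le\mathbf 0'$ it invokes \Cref{prevalent} (the prefix set of a non-$\Delta^0_2$ real is co-$\Sigma^0_1$-dense), and only for high $\mathbf h\le\mathbf 0'$ does it use the classical fact (Cooper \cite{MR360240}) that the cohesive/s.h.h.i.\ degrees below $\mathbf 0'$ are exactly the high ones. You instead treat all high degrees at once: any degree $\mathbf a$ with $\mathbf a'\ge\mathbf 0''$ contains a cohesive set, and then you run cohesive $\Rightarrow$ r-cohesive $\Rightarrow$ s.h.i.\ $\Rightarrow$ co-$\Sigma^0_1$-dense. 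This avoids the case split and \Cref{prevalent} entirely, and your explicit remark that it is the \emph{complement} of the cohesive witness that is $\Sigma^0_1$-dense is a point where you are more careful than the paper's own wording; the price is that all the weight falls on one external citation.

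That citation, however, is misstated, and this is the one genuine flaw. The Jockusch--Stephan theorem in \cite{MR1270396} is \emph{not} that the cohesive degrees are exactly the degrees $\mathbf a$ with $\mathbf a'\ge\mathbf 0''$; the title of their paper, ``A cohesive set which is not high,'' is precisely a counterexample to that equivalence. The correct characterization is that $\mathbf a$ is a cohesive (equivalently r-cohesive) degree if and only if $\mathbf a'\gg\mathbf 0'$, i.e., $\mathbf a'$ is of PA degree relative to $\mathbf 0'$. Your proof survives because it only uses the true direction: if $\mathbf a'\ge\mathbf 0''$ then $\mathbf a'\gg\mathbf 0'$ (since $\mathbf 0''\gg\mathbf 0'$ and the relation $\gg$ is upward closed in its first argument), so $\mathbf a$ does contain a cohesive set. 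So restate the external input as this implication rather than as an ``exact'' characterization, and the argument is correct as a clean alternative to the paper's proof.
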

\begin{proof}
	Let $\mathbf h$ be a Turing degree.
	If $\mathbf h\not\le \mathbf 0'$, then $\mathbf h$ contains a $\Sigma^0_1$-dense set by Theorem \ref{prevalent}.

	If $\mathbf h\le \mathbf 0'$ and $\mathbf h$ is high then since the strongly hyperhyperimmune and cohesive degrees coincide, and are exactly the high degrees \cite{MR360240},
	$\mathbf h$ contains a strongly hyperimmune set. Hence by Theorem \ref{thm:shi}, $\mathbf h$ contains a $\Sigma^0_1$-dense set.
\end{proof}

\subsection{Progressive approximations}
\begin{definition}
	Let $A$ be a $\Delta^0_2$ set.
	A computable approximation $\{\sigma_t\}_{t\in\omega}$ of $A$, where each $\sigma_t$ is a finite string and $\lim_{t\to\infty}\sigma_t=A$, is \emph{progressive} if
	for each $t$,
	\begin{itemize}
		\item if $\abs{\sigma_t}\le\abs{\sigma_{t-1}}$ then $\sigma_t\restrict(\abs{\sigma_t}-1) =  \sigma_{t-1}\restrict(\abs{\sigma_t}-1)$ (the last bit of $\sigma_t$ is the only difference with $\sigma_{t-1}$);
		\item if $\abs{\sigma_t} > \abs{\sigma_{t-1}}$ then $\sigma_{t-1}\prec\sigma_t$; and
		\item if $\sigma_t\not\prec\sigma_s$ for some $s>t$ then $\sigma_t\not\prec\sigma_{s'}$ for all $s'\ge s$ (once an approximation looks wrong, it never looks right again).
	\end{itemize}
	If $A$ has a progressive approximation then we say that $A$ is \emph{progressively approximable}.
\end{definition}
Note that a progressively approximable set must be $h$-c.e.~where $h(n)=2^n$.

\begin{theorem}\label{January 31}
	Let $A$ be a progressively approximable and noncomputable set. Let $\{\sigma_t\}_{t\in\omega}$ be a progressive approximation of $A$.
	Then $\{t: \sigma_t \prec A\}$ is constructively immune.
\end{theorem}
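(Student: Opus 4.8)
The plan is to exhibit $B=\{t:\sigma_t\prec A\}$ as a co-c.e.\ set with no infinite c.e.\ subset, and then to invoke a general principle. First I would record the key consequence of the third clause of progressiveness: once $\sigma_t$ ceases to be a prefix of the current approximation it never becomes one again, while $\lim_s\sigma_s=A$ forces $\sigma_t\prec\sigma_s$ for all large $s$ whenever $\sigma_t\prec A$. Together these give the clean equivalence
\[
	\sigma_t\prec A\quad\Longleftrightarrow\quad(\forall s>t)\ \sigma_t\prec\sigma_s .
\]
Two things follow at once. The complement of $B$ is $\{t:(\exists s>t)\ \sigma_t\not\prec\sigma_s\}$, which is c.e.\ because the approximation is computable; hence $B$ is co-c.e. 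And if $t<t'$ are both in $B$ then $\sigma_t\prec\sigma_{t'}$, so any two distinct correct stages have different string lengths; in particular correct stages have strictly increasing lengths.

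Next I would isolate the general principle doing the real work: a co-c.e.\ set with no infinite c.e.\ subset is constructively immune. Given such a $B$ with $\overline B$ enumerated by some procedure, define $\psi(x)$ to dovetail the enumerations of $W_x$ and $\overline B$ and output the first number found in both. If $W_x$ is infinite then, having no infinite c.e.\ subset, $B$ cannot contain $W_x$, so $W_x\cap\overline B\neq\emptyset$ and $\psi(x)\downarrow$ with $\psi(x)\in W_x\setminus B$. This is exactly the mechanism behind the assertion in \Cref{xiang} that the complement of a simple set is constructively immune. Thus it remains to prove that $B$ is immune, that is, infinite with no infinite c.e.\ subset.

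For the absence of an infinite c.e.\ subset I would use noncomputability of $A$. Suppose $W\subseteq B$ were infinite and c.e. By the strict monotonicity of lengths just noted, distinct elements of $W$ yield correct prefixes $\sigma_t=A\restrict\abs{\sigma_t}$ of pairwise distinct, hence unbounded, lengths. To compute $A(m)$ I would enumerate $W$ until some $t\in W$ with $\abs{\sigma_t}>m$ appears and then output $\sigma_t(m)$; the search halts, and the answer is correct because $t\in B$ guarantees $\sigma_t\prec A$. Then $A$ is computable, contradicting the hypothesis.

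The step I expect to be the crux is showing that $B$ is \emph{infinite}; this is where the first two clauses of progressiveness enter. Arguing by contradiction, suppose there are only finitely many correct stages and let $t^*$ be the last one, with $\sigma_{t^*}=A\restrict K$. By the equivalence above, every later approximation properly extends $A\restrict K$, so for all $s>t^*$ the first $K$ bits of $\sigma_s$ are correct and $\abs{\sigma_s}>K$. Since $\lim_s\sigma_s=A$, the length of agreement between $\sigma_s$ and $A$ tends to infinity, so there is a least stage $r>t^*$ at which this agreement first exceeds $K$; just before $r$ the bit in position $K$ is present and wrong, and at $r$ it is present and correct. An extension (clause 2) cannot alter an already-written bit, so the passage from $\sigma_{r-1}$ to $\sigma_r$ must fall under clause 1, which changes only the last bit; hence $\sigma_r$ has length exactly $K+1$ and equals $A\restrict(K+1)$. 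But that is a correct stage occurring after $t^*$, the desired contradiction. Making this dynamical impossibility precise---tracking how the least incorrect position can move under the two kinds of step---is the main obstacle; the rest is bookkeeping.
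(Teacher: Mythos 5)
Your second and third paragraphs already constitute a complete and correct proof, and they are essentially the paper's own argument in modular dress. The paper inlines the same two ingredients: given an infinite $W_e$ it picks an infinite computable $T\subseteq W_e$, argues that $T\not\subseteq\{t:\sigma_t\prec A\}$ because otherwise $A$ would be computable (your ``no infinite c.e.\ subset'' step, with a computable $T$ in place of your c.e.\ $W$), and then lets $\varphi(e)$ search for $t\in T$ and $s>t$ with $\sigma_t\not\prec\sigma_s$, which by the third clause of progressiveness together with convergence certifies $\sigma_t\not\prec A$ --- exactly your observation that the complement of $B=\{t:\sigma_t\prec A\}$ is c.e. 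Your repackaging through the lemma ``co-c.e.\ with no infinite c.e.\ subset implies constructively immune'' (the mechanism of part (2) of \Cref{xiang}, as you note) is a clean way to organize the same proof.

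Where you go astray is in identifying the crux. First, infiniteness of $B$ is not needed: the paper's definition of constructive immunity demands only the partial recursive $\psi$, with no requirement that the set be infinite, and your dovetailing argument never uses infiniteness of $B$. Second, the argument you sketch for infiniteness has a genuine gap: the least stage $r>t^*$ at which the agreement with $A$ exceeds $K$ may be $r=t^*+1$, and then ``just before $r$'' is $t^*$ itself, where the bit in position $K$ is absent rather than ``present and wrong''. Concretely, a second-clause extension of $\sigma_{t^*}=A\restrict K$ may append the correct bit $A(K)$ together with further incorrect bits; the agreement then jumps past $K$ even though $\sigma_r\not\prec A$, and no contradiction results. (You would also need to handle the case that no correct stage exists at all, so that $t^*$ is undefined.) The claim itself is true, and the robust invariant is the leftmost incorrect position $w_s$ of $\sigma_s$: an extension preserves $w_s$, and a first-clause step can increase $w_s$, or make it undefined, only by producing a string whose earlier bits are unchanged (hence correct) and whose last bit is its only possibly corrected one --- that is, only by producing a correct stage. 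So as long as no correct stage occurs, $w_s$ is non-increasing, while convergence forces $w_s\to\infty$; this yields infinitely many correct stages and covers the missing case as well. But again, for the theorem as stated none of this is required.
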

\begin{proof}
	Let $W_e$ be an infinite c.e.~set and let $T$ be an infinite computable subset of $W_e$.
	Since $A$ is noncomputable, we do not have $T\subseteq\{t:\sigma_t\prec A\}$.
	Since the approximation $\{\sigma_t\}_{t\in\omega}$ is progressive, once we observe a $t$ for which $\sigma_t\not\prec\sigma_s$, for some $s>t$, then we know that $\sigma_t\not\prec A$.
	Then we define $\varphi(e)=t$, and $\varphi$ witnesses that $\{t: \sigma_t \prec A\}$ is constructively immune.
\end{proof}

A direction for future work may be to find new Turing degrees of progressively approximable sets.

\bibliographystyle{plain}
\bibliography{cie21birns}

\end{document}